\numberwithin{equation}{section}
\newcommand{\xRightarrow}[2][]{\ext@arrow 0359\Rightarrowfill@{#1}{#2}}
\newcommand*{\rom}[1]{\expandafter\@slowromancap\romannumeral #1@}
\theoremstyle{plain}
\newtheorem{Th}{Theorem}[section]
\newtheorem{Lemma}[Th]{Lemma}
\newtheorem{Cor}[Th]{Corollary}
\theoremstyle{definition}
\newtheorem{Def}[Th]{Definition}
\newtheorem{Rem}[Th]{Remark}
\newtheorem{Ex}[Th]{Example}
\begin{document}

	\title[]{On weak conditional convergence of bivariate Archimedean and Extreme Value copulas, and consequences to nonparametric estimation}
	\author[]{Thimo M. Kasper, Sebastian Fuchs, Wolfgang Trutschnig}
	\address{Department for Mathematics, University of Salzburg \\
Hellbrunnerstrasse 34, A-5020 Salzburg, Austria}
	\email{thimo.kasper@sbg.ac.at \\ sebastian.fuchs@sbg.ac.at \\ wolfgang.trutschnig@sbg.ac.at}


\maketitle
\vspace*{-0.6cm}
\section*{Keywords}
Archimedean copula,
Extreme Value copula,
Checkerboard copula,
weak convergence,
estimation,
dependence measure
		
		\begin{abstract}
			Looking at bivariate copulas from the perspective of conditional distributions and considering weak convergence of almost 
			all conditional distributions yields the notion of weak conditional convergence.  
			At first glance, this notion of convergence for copulas might seem far too restrictive to be of any practical 
			importance - in fact, given samples of a copula $C$ the corresponding empirical co\-pulas do not converge 
			weakly conditional to $C$ with probability one in general.  
			Within the class of Archimedean copulas and the class of Extreme Value copulas, however, standard pointwise convergence and weak conditional convergence can even be proved to be equivalent. Moreover, it can be shown that every copula $C$ is the weak
			conditional limit of a sequence of checkerboard copulas. After proving these three main results 
			and pointing out some consequences we sketch some implications for  
			two recently introduced dependence measures and for the nonparametric estimation of Archimedean and Extreme Value copulas.    
		\end{abstract}


	\section{Introduction}
	Suppose that $\{C_\theta: \theta \in \Theta\}$ is a parametric class of bivariate copulas with 
	$\Theta \subseteq \mathbb{R}^d$ for some $d \in \mathbb{N}$ 
	and let $\{K_\theta: \theta \in \Theta\}$ denote the corresponding conditional distributions (Markov kernels), i.e., 
	if $X,Y$ are uniformly distributed on $[0,1]$ and $(X,Y)$ has distribution function $C_\theta$ 
	then $K_\theta(x,E)=\mathbb{P}(Y \in E \vert X=x)$.  
	Many standard classes of copulas are not only continuous in the parameter with respect to pointwise/uniform convergence 
	(see \cite{Principles, Nelsen}) but exhibit the even stronger property that if $(\theta_n)_{n \in \mathbb{N}}$ 
	converges to 
	$\theta$ then almost all conditional distributions $(K_{\theta_n}(x,\cdot))_{n \in \mathbb{N}}$ converge weakly to $K_\theta(x,\cdot)$. In the sequel we will refer to weak convergence of almost all conditional distributions as weak conditional convergence.
	It is straightforward to verify that (among many others) the family of Gaussian co\-pulas and the family of $t$-copulas 
	exhibit the just mentioned continuity with respect to the parameter. Moreover, leaving the absolutely continuous setting, the same is true, e.g., for the Marshall-Olkin family.   
	
	Despite the afore-mentioned examples, at first glance, weak conditional convergence might seem as a concept far too restrictive to be of any practical importance outside the purely parametric setting. 
	This impression is reinforced by the fact that given samples $(X_1,Y_1),(X_2,Y_2),\ldots$ from a copula $C$ and letting $\hat{E}_n$ denote the corresponding empirical copula (bivariate interpolation of the induced subcopula, see \cite{Nelsen}) we do not have weak conditional convergence of $(\hat{E}_n)_{n \in \mathbb{N}}$ to $C$ unless $C$ is completely dependent in the sense that random variable $Y$ is a measurable function of random variable $X$ (see \cite{lanc}).  
	
	As we will demonstrate in this contribution, however, within the class of Archimedean copulas and the class of Extreme Value copulas (neither of them being a parametric class of the afore-mentioned type) standard pointwise/uni\-form convergence and weak conditional convergence are even equivalent, a result having direct implications for the dependence measures $\zeta_1$ and $r$ introduced in \cite{p06} and \cite{Dette}, respectively, as well as for the nonparametric estimation of Archimedean and Extreme Value copulas 
	(see \cite{barbe,GNZ,GenestInference} and \cite{InferenceEVs, GS}). 
	We will show that convexity of the univariate \textquoteleft generating' functions (the normalized generator in the Archimedean and the Pickands dependence function in the Extreme Value case) is the key property entailing weak conditional convergence. Additionally, building upon the theorems in \cite{approximation} we will derive a universal approximation result with respect to weak conditional convergence and show that for every bivariate copula $C$ we can find a sequence 
	$(C_n)_{n \in \mathbb{N}}$ of checkerboard copulas that converges weakly conditional to the copula $C$.
	
	The authors' interest in studying convergence of Archimedean copulas was triggered by \cite{convArch} where the authors among other things showed that pointwise/uniform convergence of a sequence of Archimedean copulas to an Archimedean copula is equivalent to convergence of the corresponding sequence of Kendall distribution functions. In our contribution we first
	derive a slightly modified version of this result (including the fact that we can have convergence of the copulas without having convergence of the corresponding generators in $0$, see Theorem \ref{T1}) 
	and then go one step further (see Theorem \ref{result}) and prove the equivalence of six different notions of convergence (some involving the copulas, some the generators), weak conditional convergence being one of them.
	
	The rest of this contribution is organized as follows: Section 2 gathers preliminaries and notations that will be used throughout 
	the paper. In Section 3 we formally define weak conditional convergence, prove that checkerboard copulas are dense with respect to weak conditional convergence, and show that weak conditional convergence implies convergence with 
	respect to the metric $D_1$ introduced in \cite{p06} but 
	in general not vice versa. Section 4 derives 
	the afore-mentioned equivalence of pointwise/uniform and weak conditional convergence within the family of Archimedean copulas
	in several steps. In Section 5 we prove an analogous characterization of convergence within the class of Extreme Value copulas. 
	Direct consequences of these two main results to the estimation of Archimedean and Extreme Value copulas are sketched and illustrated via simulations in Section 6. 
	Finally, we use the obtained results for estimating the recently introduced coefficient of correlation (see \cite{Chatterjee}) and compare the performance of the estimators incorporating or ignoring the Extreme Value/Archimedean information.
		
	\section{Notation and preliminaries}
	In the sequel we will let $\mathcal{C}$ denote the family of all bivariate copulas. For each copula $C$ the corresponding doubly stochastic measure will be denoted by $\mu_C$, i.e. $\mu_C([0,x]\times[0,y]) = C(x,y)$ for all $x,y \in [0,1]$, $\mathcal{P}_\mathcal{C}$ will denote the family of all doubly stochastic measures. For more background on copulas and doubly stochastic measures we refer to \cite{Principles,Nelsen}. 
	For every metric space $(S,d)$ the Borel $\sigma$-field on $S$ will be denoted by $\mathcal{B}(S)$.
	
	In what follows Markov kernels will play a prominent role. A \emph{Markov kernel} from $\mathbb{R}$ to $\mathbb{R}$ is a mapping $K: \mathbb{R}\times\mathcal{B}(\mathbb{R}) \rightarrow [0,1]$ such that for every fixed $E\in\mathcal{B}(\mathbb{R})$ the mapping $x\mapsto K(x,E)$ is (Borel-)measurable and for every fixed $x\in\mathbb{R}$ the mapping $E\mapsto K(x,E)$ is a probability measure. Given two real-valued random variables $X,Y$ on a probability space $(\Omega, \mathcal{A}, \mathbb{P})$ we say that a Markov kernel $K$ is a \emph{regular conditional distribution} of $Y$ given $X$ if 
	$K(X(\omega), E) = \mathbb{E}(\mathds{1}_E \circ Y | X)(\omega) $
	holds $\mathbb{P}$-almost surely for every $E\in \mathcal{B}(\mathbb{R})$. It is well-known (see, e.g., \cite{Kallenberg,Klenke}) that for $X,Y$ as above, a regular conditional distribution of $Y$ given $X$ always exists and is unique for $\mathbb{P}^X$-a.e. $x\in\mathbb{R}$. If $(X,Y)$ has distribution function $H$ (in which case we will also write $(X,Y) \sim H$ and let $\mu_H$ denote the corresponding probability measure on $\mathcal{B}(\mathbb{R}^2)$) we will let $K_H$ denote (a version of) the regular conditional distribution of $Y$ given $X$ and simply refer to it as \emph{Markov kernel of $H$}. If $C$ is a copula then we will consider the Markov kernel of $C$ automatically as mapping $K_C:[0,1] \times\mathcal{B}([0,1])\rightarrow [0,1]$.
	Defining the $x$-section of a set $G\in\mathcal{B}(\mathbb{R}^2)$ as $G_x := \lbrace y\in \mathbb{R}: (x,y)\in G\rbrace$ the so-called disintegration theorem (see \cite{Kallenberg,Klenke}) yields 
	\begin{align}\label{eq:di}
	\int\limits_{\mathbb{R}} K_H(x,G_x) \ \mathrm{d}\mathbb{P}^X(x) = \mu_H(G).
	\end{align}
	As a direct consequence, for every $C \in \mathcal{C}$ we get
	\begin{align*}
	\int\limits_{[0,1]} K_C(x,E) \ \mathrm{d}\mathbb{P}^X(x) = \int\limits_{[0,1]} K_C(x,E) \ \mathrm{d}\lambda(x) = \lambda(E)
	\end{align*}
	for every $E\in \mathcal{B}([0,1])$, whereby $\lambda$ denotes the Lebesgue measure on $\mathbb{R}$. For more background 
	on conditional expectation and general disintegration we refer to \cite{Kallenberg, Klenke}. 

	We call a copula $C$ \emph{completely dependent} if there exists a $\lambda$-preserving transformation $h:[0,1] \to [0,1]$ (i.e., a transformation fulfilling $\lambda(h^{-1}(E))=\lambda(E)$ for every $E \in \mathcal{B}([0,1])$) such that
	$K(x,E):=\mathbf{1}_E(h(x))$ is a Markov kernel of $C$. For more properties of complete dependence we refer to \cite{lanc} as well as to \cite{p06} and the references therein.
	
	Markov kernels can be used to define metrics stronger than the standard \emph{uniform metric} $d_\infty$, defined by
	\begin{align}
	d_\infty(C_1,C_2) := \max\limits_{(x,y)\in[0,1]^2} |C_1(x,y) - C_2(x,y)|,
	\end{align}
	on $\mathcal{C}$. 
	It is well known that the metric space $(\mathcal{C}, d_\infty)$ is compact and that pointwise and uniform convergence 
	of a sequence of copulas $(C_n)_{n\in \mathbb{N}}$ are equivalent (see \cite{Principles, p07}). 
	Following \cite{p06} and defining
	\begin{align}
	D_1(C_1,C_2) &:= \int\limits_{[0,1]}\int\limits_{[0,1]} |K_{C_1}(x,[0,y]) - K_{C_2}(x,[0,y])| \ \mathrm{d}\lambda(x) \mathrm{d}\lambda(y), \\
	D_2^2(C_1,C_2) &:= \int\limits_{[0,1]}\int\limits_{[0,1]} (K_{C_1}(x,[0,y]) - K_{C_2}(x,[0,y]))^2 \ \mathrm{d}\lambda(x) \mathrm{d}\lambda(y), \nonumber\\
	D_\infty(C_1,C_2) &:= \sup_{y\in[0,1]}\int\limits_{[0,1]} |K_{C_1}(x,[0,y]) - K_{C_2}(x,[0,y])| \ \mathrm{d}\lambda(x)\nonumber
	\end{align}
	it can be shown that $D_1,D_2,D_\infty$ are metrics generating the same topology on $\mathcal{C}$. 
	In the sequel we will mainly work with $D_1$ and refer to \cite{p15, p06} for more information on $D_2$ and $D_\infty$. The metric space $(\mathcal{C}, D_1)$ is complete and separable but not compact. Moreover, we have 
	$D_1(C,\Pi)\in [0,\frac{1}{3}]$ for every $C \in \mathcal{C}$ and $D_1(C,\Pi)$ is maximal if and only if $C$ is completely dependent.
	The metric $D_1$ was originally introduced in order to construct a dependence measure which, contrary to $d_\infty$, is capable of separating independence and complete dependence. The resulting $D_1$-based dependence measure $\zeta_1$ introduced in \cite{p06} is defined as  
	\begin{align}
	\zeta_1(C) := 3\cdot D_1(C, \Pi)
	\end{align}
	for every $C\in\mathcal{C}$. In the sequel we will also consider the dependence measure $r(X,Y)=r(C)$ 
	introduced in \cite{Dette} as   
	\begin{align}
	r(C) := 6\cdot \int\limits_{[0,1]}\int\limits_{[0,1]} K_C(x,[0,y])^2 \mathrm{d}\lambda(x) \mathrm{d}\lambda(y) - 2.
	\end{align}
	It is straightforward to verify that $r(C)$ can be expressed in terms of $D_2$ and that 
	$r(C)= 6\cdot D_2^2(C,\Pi)$ holds. Both $\zeta_1$ and $r$ attain values in $[0,1]$, are $0$ if, and only if $C=\Pi$, and $1$ if, and only if $C$ is completely dependent. 
	
  	A symmetric version of $D_1$-convergence (or, equivalently, $D_2$-converence) was introduced by Mikusi\'{n}ski 
  	and Taylor in \cite{approxNCopulas} under the name $\partial$-convergence:
	A sequence $(C_n)_{n \in \mathbb{N}}$ $\partial$-converges to a copula $C \in \mathcal{C}$
	($C_n \xrightarrow{\partial} C$ for short)
	if and only if 
	\begin{align*}
		\lim\limits_{n\to\infty} D_1(C_n,C) + D_1(C_n^t, C^t) = 0.
	\end{align*}
	We will see in the next section that in general weak conditional convergence does not imply $\partial$-converges 
	(Example \ref{wcc.symm.ex}) nor vice versa. For a thorough survey of different notions on convergence of copulas we refer to \cite{Sempi:Convergence}. 
		
	\section{Weak conditional convergence and checkerboards}
	Sticking to the idea of viewing bivariate copulas in terms of their conditional distributions and considering weak convergence gives rise to what we refer to as weak conditional convergence in the sequel: 
	\begin{Def}\label{def:wcc}
		Suppose that $C,C_1,C_2,\ldots$ are copulas and let $K_C,K_{C_1}, K_{C_2},\ldots$ be (versions of) the corresponding Markov kernels. We will say that $(C_n)_{n \in \mathbb{N}}$ converges \emph{weakly conditional} 
		to $C$ if and only if for $\lambda$-almost every $x \in [0,1]$ we have that the sequence $(K_{C_n}(x,\cdot))_{n \in \mathbb{N}}$ of probability measures on $\mathcal{B}([0,1])$ converges weakly to the probability measure $K_{C}(x,\cdot)$.
		In the latter case we will write $C_n \xrightarrow{\text{wcc}} C$ (where \textquoteleft wcc' stands for 
		\textquoteleft weak conditional convergence'). 
	\end{Def}
	As already mentioned in the Introduction, many standard parametric classes  $\{C_\theta: \theta \in \Theta\}$  of copulas 
	depend on the parameter $\theta$ weakly conditional in the sense that if 
	$(\theta_n)_{n \in \mathbb{N}}$ converges to $\theta$ then the corresponding sequence $(C_{\theta_n})_{n \in \mathbb{N}}$ converges weakly conditional to $C_\theta$.  
	This is obviously true for parametric classes $\{C_\theta: \theta \in \Theta\}$ of absolutely continuous copulas 
	whose corresponding densities $\{k_\theta: \theta \in \Theta\}$ have the property that if 
	$(\theta_n)_{n \in \mathbb{N}}$ converges to $\theta \in \Theta$ then $(k_{\theta_n})_{n \in \mathbb{N}}$ converges to 
	$k_\theta$ $\lambda_2$-almost everywhere (whereby $\lambda_2$ denotes the two-dimensional Lebesgue measure on $\mathcal{B}(\mathbb{R}^2)$). In fact, in this case the corresponding Markov kernels $K_{C_{\theta_n}}$ are given by
	\begin{align}
	K_{C_{\theta_n}}(x,[0,y]) = \int_{[0,y]} k_{\theta_n}(x,s) \, d\lambda(s)
	\end{align}
	and if we let $\Lambda \in \mathcal{B}([0,1]^2)$ denote the set of all points $(x,y)$ fulfilling 
	$\lim_{n \rightarrow \infty} k_{\theta_n}(x,y)=k_\theta(x,y)$ then disintegration yields $\lambda(\Lambda_x)=1$ for 
	$\lambda$-almost every $x \in [0,1]$, so the property follows immediately. 
	It is straightforward to verify that (among many others) the family of Gaussian copulas and the family of $t$-copulas fulfill this property.
	
	The same is true for other, not necessarily absolutely continuous classes like the Marshall-Olkin family 
	$(M_{\alpha,\beta})_{(\alpha,\beta) \in [0,1]^2}$ (see \cite{MarshallOlkinDistributions,Principles,Nelsen}) given by 
	\begin{align}
	M_{\alpha,\beta} (x,y)  =  \begin{cases}
	x^{1-\alpha} \, y & \text{if } x^\alpha \geq y^\beta\\
	x\,y^{1-\beta} & \text{if } x^\alpha < y^\beta.
	\end{cases}
	\end{align} 
	According to \cite{p06} the corresponding Markov kernel $K_{M_{\alpha,\beta}}$ is given by 
	\begin{align}
	K_{M_{\alpha,\beta}} (x,[0,y])  = \begin{cases}
	(1-\alpha) x^{-\alpha}\,y & \text{if } y^\beta < x^\alpha\\
	y^{1-\beta} & \text{if }y^\beta \geq  x^\alpha
	\end{cases}
	\end{align} 
	and it is straightforward to verify that if the parameter vector $(\alpha_n,\beta_n)_{n \in \mathbb{N}}$ converges to 
	$(\alpha,\beta)$ then we also have $M_{\alpha_n,\beta_n} \xrightarrow{\text{wcc}} M_{\alpha,\beta}$.
	
	Before focusing on the Archimedean and the Extreme Value setting we prove a general approximation result saying that 
	the class of checkerboard copulas is dense in $\mathcal{C}$ w.r.t. 
	weak conditional convergence (see Theorem \ref{thm:checkerboard}).
	Recall that a copula $C$ is called a \emph{checkerboard copula} with resolution $N \in \mathbb{N}$ if and only if $\mu_C$ distributes its mass uniformly on each rectangle $R_{ij}^N=[\frac{i-1}{N},\frac{i}{N}] \times [\frac{j-1}{N},\frac{j}{N}]$ with $i,j \in \{1,\ldots,N\}$. We will refer to $\mathcal{S}_N$ as the family of all checkerboard copulas with resolution $N$, 
	the set $\mathcal{S}=\bigcup_{N=1}^\infty \mathcal{S}_N$ is called the class of all checkerboard copulas (checkerboards for short). 
	For every $N \in \mathbb{N}$ the (unique) checkerboard copula $\mathfrak{CB}_{N}(C) \in \mathcal{S}_N$ fulfilling 
	$$
	\mu_C(R_{ij}^N)=\mu_{\mathfrak{CB}_{N}(C)}(R_{ij}^N)
	$$
	for all $i,j \in \{1,\ldots,N\}$ will be referred to as \emph{$N$-checkerboard approximation of the copula $C$}. 
	
	It is well-known that the class of checkerboard copulas $\mathcal{S}$ is dense in $(\mathcal{C},d_\infty)$ and in $(\mathcal{C},D_1)$, see \cite{Principles, approximation, p06}. The following theorem implies these two interrelations: 
	\begin{Th}\label{thm:checkerboard}
		Given a copula $C$, there is a sequence
	of elements of $\mathcal{S}$ that converges to $C$ with respect to weak conditional convergence. In other words, $\mathcal{S}$ is dense in $\mathcal{C}$ with respect to weak conditional convergence.
	\end{Th}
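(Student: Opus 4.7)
The plan is to verify that the canonical $N$-checkerboard approximation $\mathfrak{CB}_N(C)$ itself furnishes the desired sequence, possibly after passing to a subsequence. Starting from the already recalled density of $\mathcal{S}$ in $(\mathcal{C}, D_1)$, i.e.\ $D_1(\mathfrak{CB}_N(C), C) \to 0$, Fubini's theorem rewrites this as $\lVert g_N \rVert_{L^1([0,1],\lambda)} \to 0$, where
$$g_N(x) := \int_{[0,1]} \bigl|K_{\mathfrak{CB}_N(C)}(x,[0,y]) - K_C(x,[0,y])\bigr|\, d\lambda(y).$$
A standard subsequence extraction then produces $(N_k)_{k\in\mathbb{N}}$ with $g_{N_k}(x) \to 0$ for $\lambda$-almost every $x \in [0,1]$, so that for each such $x$ the conditional distribution functions $F_k(y) := K_{\mathfrak{CB}_{N_k}(C)}(x,[0,y])$ converge to $F(y) := K_C(x,[0,y])$ in $L^1([0,1],\lambda)$.

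The heart of the argument is to upgrade this $L^1$-convergence to weak convergence of the associated probability measures $K_{\mathfrak{CB}_{N_k}(C)}(x,\cdot) \to K_C(x,\cdot)$ on $\mathcal{B}([0,1])$. Here monotonicity of the distribution functions $F_k$ is decisive: for every $y_0 \in (0,1)$ and every sufficiently small $h > 0$ one has the sandwich
$$\frac{1}{h}\int_{y_0-h}^{y_0} F_k \, d\lambda \;\leq\; F_k(y_0) \;\leq\; \frac{1}{h}\int_{y_0}^{y_0+h} F_k \, d\lambda,$$
and passing first $k \to \infty$ via $L^1$-convergence and then $h \to 0^+$ while invoking continuity of $F$ at $y_0$ yields $F_k(y_0) \to F(y_0)$ at every continuity point of $F$. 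Since convergence of distribution functions at all continuity points of the limit is equivalent to weak convergence of the induced probability measures on $[0,1]$, this gives $K_{\mathfrak{CB}_{N_k}(C)}(x,\cdot) \to K_C(x,\cdot)$ weakly for $\lambda$-almost every $x$, hence $\mathfrak{CB}_{N_k}(C) \xrightarrow{\text{wcc}} C$ by Definition \ref{def:wcc}.

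The main obstacle is exactly this last upgrade. The limit kernel $K_C(x,\cdot)$ need not be absolutely continuous: if, for instance, $C$ is completely dependent then $K_C(x,\cdot)$ is a Dirac measure and the distribution function $y \mapsto K_C(x,[0,y])$ has a jump. Consequently one cannot hope for pointwise convergence everywhere, and the sandwich trick exploiting monotonicity of the $F_k$ is precisely what produces convergence at all continuity points of $F$ while sidestepping the possible jumps. The remaining ingredients (Fubini, subsequence extraction, and the portmanteau characterisation of weak convergence) are entirely routine.
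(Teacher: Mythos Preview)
Your argument is correct and takes a genuinely different route from the paper's. The paper works directly with the dyadic checkerboards $\mathfrak{CB}_{2^n}(C)$ and exploits the explicit kernel formula: for $x$ not a dyadic rational and $y \in \mathcal{W}$ a dyadic rational, one has
\[
K_{\mathfrak{CB}_{2^n}(C)}(x,[0,y]) = \frac{C\!\left(\tfrac{i_n(x)}{2^n},y\right) - C\!\left(\tfrac{i_n(x)-1}{2^n},y\right)}{1/2^n},
\]
which is a difference quotient in the first coordinate and hence converges to $\tfrac{\partial C}{\partial x}(x,y) = K_C(x,[0,y])$ at every point of differentiability. This gives convergence of the conditional distribution functions on the dense set $\mathcal{W}$ for all $x$ in a single full-measure set, without any subsequence extraction. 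Your approach, by contrast, bootstraps from the known $D_1$-convergence $D_1(\mathfrak{CB}_N(C),C)\to 0$: Fubini plus a subsequence yields $L^1$-convergence of the conditional distribution functions for almost every $x$, and your sandwich lemma upgrades this to pointwise convergence at continuity points of the limit. What the paper's proof buys is the full dyadic sequence (no subsequence needed) and self-containment, since it does not presuppose $D_1$-density. What your proof buys is generality: your argument nowhere uses the checkerboard structure beyond the $D_1$-convergence, so it actually establishes the stronger fact that \emph{every} $D_1$-convergent sequence of copulas admits a weakly conditionally convergent subsequence with the same limit. One small caution: you identify ``density of $\mathcal{S}$ in $(\mathcal{C},D_1)$'' with the specific statement $D_1(\mathfrak{CB}_N(C),C)\to 0$; the latter is indeed what is proved in \cite{p06}, but density alone would only give \emph{some} sequence in $\mathcal{S}$, which would still suffice for your purposes.
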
 
	\begin{proof}
		Fix $C \in \mathcal{C}$ and suppose that $K_C$ is a Markov kernel of $C$. We are going to show that $(\mathfrak{CB}_{2^n}(C))_{n \in \mathbb{N}}$ converges weakly conditional to $C$.
		Using Lipschitz continuity of copulas in each coordinate, for every $y$ of the form $y=\frac{j}{2^m}$ with $m \in \mathbb{N}$ and $j \in \{0,\ldots,2^m\}$ there exists a set $\Lambda_y \in \mathcal{B}([0,1])$ with the following three properties: 
		\begin{enumerate}
			\item For every $x \in \Lambda_y$ the function $t \mapsto C(t,y)$ is differentiable at $x$ and fulfills 
			$\frac{\partial C}{\partial x}(x,y)=K_C(x,[0,y])$,
			\item $\lambda(\Lambda_y)=1$,
			\item $\Lambda_y \subseteq \Big(\bigcup_{l=1}^\infty \{0,\frac{1}{2^l},\frac{2}{2^l},\ldots,\frac{2^l-1}{2^l},1\}\Big)^c$.
		\end{enumerate}
		Letting $\mathcal{W}$ denote the set of all points $y$ of the form $y=\frac{j}{2^m}$ with $m \in \mathbb{N}$ and 
		$j \in \{0,\ldots,2^m\}$ it follows that $\mathcal{W}$ is countably infinite, hence setting 
		$\Lambda=\bigcap_{y \in \mathcal{W}} \Lambda_y$ yields $\lambda(\Lambda)=1$. \\
		Consider $y=\frac{j}{2^m} \in \mathcal{W}$ and $x \in \Lambda$. For every $n \geq m $ there exists exactly one index 
		$i_n(x) \in \{0,\ldots,2^n\}$ with  
		$$
		x \in \left(\frac{i_n(x)-1}{2^n}, \frac{i_n(x)}{2^n} \right). 
		$$    
		Considering that $t \mapsto K_{\mathfrak{CB}_{2^n}(C)}(t,\cdot)$ is constant on the interval $\left(\frac{i_n(x)-1}{2^n}, \frac{i_n(x)}{2^n} \right)$ disintegration yields
		\begin{align*}
		C\left(\frac{i_n(x)}{2^n},y \right) - C\left(\frac{i_n(x)-1}{2^n},y \right) &=
		\int\limits_{\left(\frac{i_n(x)-1}{2^n}, \frac{i_n(x)}{2^n} \right]} K_{\mathfrak{CB}_{2^n}(C)}(t,[0,y]) \mathrm{d}\lambda(t) \\
		&= \frac{1}{2^n} \, K_{\mathfrak{CB}_{2^n}(C)}(x,[0,y]), 
		\end{align*}
		from which we directly get
		$$
		K_{\mathfrak{CB}_{2^n}(C)}(x,[0,y]) = \frac{C\left(\frac{i_n(x)}{2^n},y \right) - C\left(\frac{i_n(x)-1}{2^n},y \right)}{\frac{1}{2^n}} \stackrel{n \rightarrow \infty}{\longrightarrow} 
		\frac{\partial C}{\partial x}(x,y)=K_C(x,[0,y]).
		$$
		Since $(x,y) \in \Lambda \times \mathcal{W}$ was arbitrary we have shown that for each $x \in \Lambda$ the conditional distribution functions $y \mapsto K_{\mathfrak{CB}_{2^n}(C)}(x,[0,y])$ converge to $y \mapsto K_C(x,[0,y])$ for every 
		$y \in \mathcal{W}$, i.e., on a dense set. Having this, weak conditional convergence of $(\mathfrak{CB}_{2^n}(C))_{n \in \mathbb{N}}$ to $C$ 
		follows immediately. 
	\end{proof}

	We conclude this section with an example clarifying the interrelation between the afore-mentioned notions of convergence: According to Lemma 7 in \cite{p06} weak conditional convergence of $(C_n)_{n \in \mathbb{N}}$ to $C$ implies convergence w.r.t. $D_1$. Additionally, convergence w.r.t. $D_1$ implies convergence in $d_\infty$ but not vice versa. 
	The following simple example shows that we can have $D_1$-convergence without having weak conditional convergence.

	\begin{Ex}\label{ex:d1.not.wcc}
		Let $N\in\mathbb{N}, i\in\lbrace 1,\ldots,N \rbrace$ and $n = 2^N+i-2$. Define the copula $C_n$ via its Markov kernel by
		\begin{align*}
		K_{C_n}(x,[0,y]) = \begin{cases}
		\mathds{1}_{[0,y]}(2^N\cdot x  + 1 - i) & \text{ if } x\in [\frac{i-1}{2^N},\frac{i}{2^N}] \\
		y & \text{ if } x\in [0,1]\setminus[\frac{i-1}{2^N},\frac{i}{2^N}].
		\end{cases}
		\end{align*}
		Then $C_n$ does not converge weakly conditional to $\Pi$ since for every $x \in [0,1]$ the probability measure
		$K_{C_n}(x,\cdot)$ is, on the one hand, degenerated for infinitely many $n \in \mathbb{N}$ and, on the other hand, 
		coincides with $\lambda$ restricted to $[0,1]$ infinitely many times too. 
		Nevertheless $C_n$ converges to $\Pi$ w.r.t. $D_1$ since 
		\begin{align*}
		D_1(C_n,\Pi) \leq \frac{1}{2^N}
		\end{align*}
		holds and if $n$ goes to infinity then so does $N$. 
	\end{Ex}
	
	We conclude this section with an example showing that, in contrast to convergence w.r.t. to $d_\infty$,
	neither convergence w.r.t. $D_1$ nor weak conditional convergence is a symmetric concept, 
	i.e., considering Markov kernels $K_{C_n^t}(x,\cdot)$ instead of $K_{C_n}(x,\cdot), n\in\mathbb{N}$,  
	may yield a different notion of convergence (as usual $C^t$ denotes the transpose of $C$, i.e. $C^t(x,y) = C(y,x)$).

	\begin{Ex} \label{wcc.symm.ex}
	    For every $n \in \mathbb{N}$ define the $\lambda$-preserving transformation 
		$h_n: [0,1] \to [0,1]$ by 
		$$
		  h_n (x) 
			:= 2^n \, x \, (mod \, 1)
		$$
		and let $C_n$ denote the corresponding completely dependent copula.
		Since $K_{C_n^t} (x,.)$ is the (discrete) uniform distribution on 
		$\{\frac{x}{2^n} + \frac{i}{2^n} \, : \, i \in \{0, \dots, 2^n-1\}\}$, 
		$K_{C_n^t} (x,.)$ converges weakly to $K_{\Pi} (x,.)$ for $\lambda$-almost every $x \in [0,1]$ and we have  
		$$ 
		  C_n^t \xrightarrow{wcc} \Pi,
		$$
		implying that $(C_n^t)_{n \in \mathbb{N}}$ converges to $\Pi$ w.r.t. $D_1$.
		Additionally (see \cite[Theorem 14]{p06}) $D_1 (C_n,\Pi) = 1/3$ holds for every $n \in \mathbb{N}$ from which it follows
		immediately that $(C_n)_{n \in \mathbb{N}}$ does not converge to $\Pi$ neither weakly conditional nor w.r.t. $D_1$.
		It can even be shown that $(C_n)_{n \in \mathbb{N}}$ does not converge in $(\mathcal{C},D_1)$ at all: 
		If $(C_n)_{n \in \mathbb{N}}$ would converge to some copula $C$ w.r.t. $D_1$ then 
		according to \cite[Proposition 15]{p06}) $C$ would be completely dependent, i.e., there would be some 
		$\lambda$-preserving transformation $h$ such that $C=C_h$ holds.  
		Considering that $D_1$-convergence implies $d_\infty$-convergence $C_h^t=\Pi$ would follow, a contradiction since 
		$C_h^t$ is singular whereas $\Pi$ is absolutely continuous. 
	\end{Ex}
	
	\begin{figure}[htp!]
			\centering
			\includegraphics[width=0.8\textwidth]{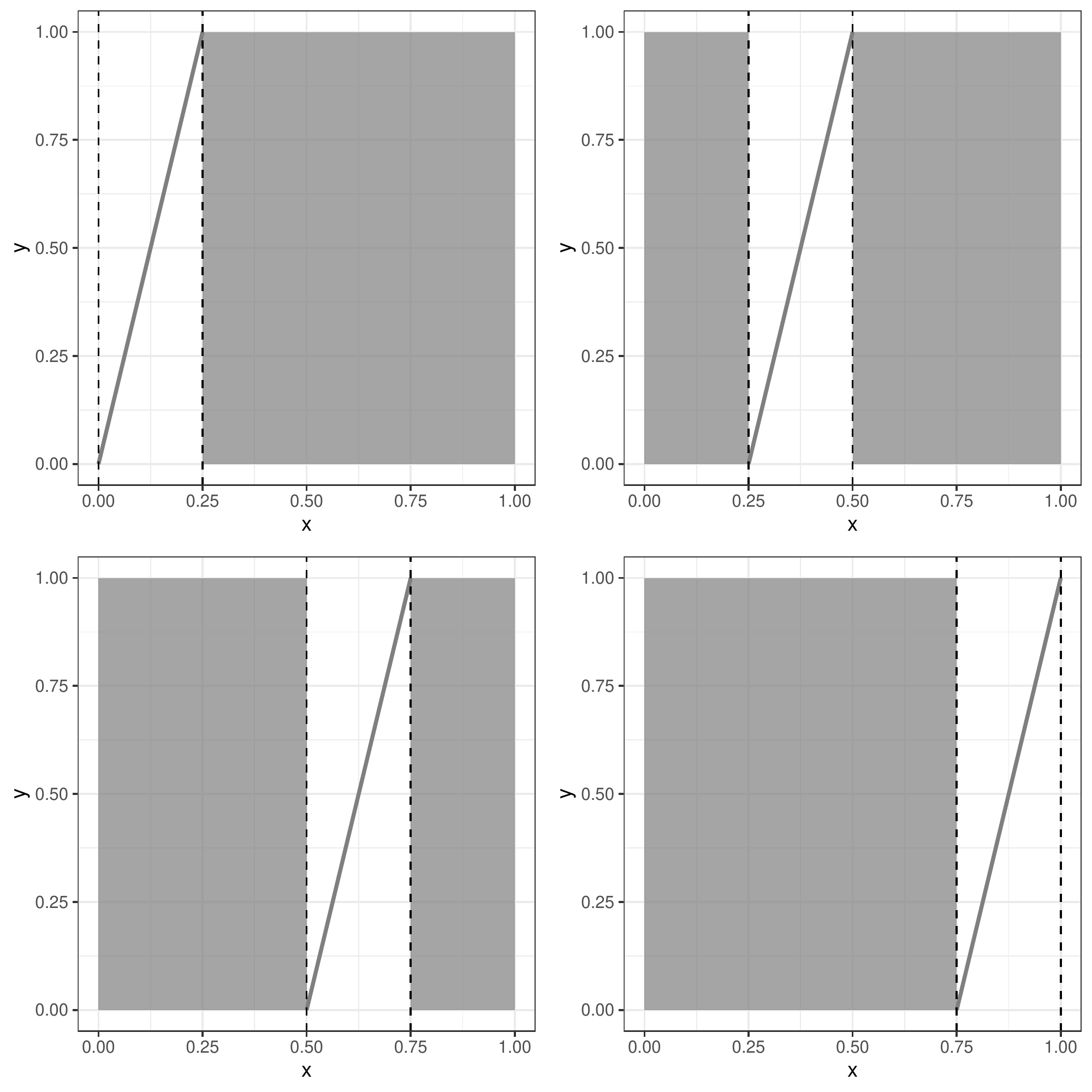}
			\caption{Supports of the copulas $C_3,C_4,C_5,C_6$ considered in Example \ref{ex:d1.not.wcc}.}
			\label{d1.not.wcc}
			\vspace*{-3mm}
		\end{figure}

  \begin{Rem} \label{delta.convergence.rem1}
	The preceding examples imply that weak conditional convergence does not imply $\partial$-convergence 
	(Example \ref{wcc.symm.ex}) nor vice versa (Example \ref{ex:d1.not.wcc}).
	\end{Rem}
	
	\section{Archimedean copulas}\label{section:archimedean}
	Recall that a generator of a bivariate Archimedean copula (see \cite{Nelsen}) is a convex and strictly decreasing function $\varphi:[0,1]\rightarrow [0,\infty]$ with $\varphi(1) = 0$. 	
	Every generator induces a copula $C$ via
	\begin{align}
	C(x,y) = \varphi^{-}(\varphi(x)+\varphi(y)), \ \ \ x,y\in [0,1]
	\end{align}
	where $\varphi^{-}:[0,\infty] \rightarrow [0,1]$ denotes the pseudoinverse of $\varphi$ defined by
	\begin{align}
	\varphi^{-}(x) := \begin{cases}
	\varphi^{-1}(x) & \text{if } x\in[0,\varphi(0+)) \\
	0 & \text{if } x\geq \varphi(0+)
	\end{cases}
	\end{align}
	where $\varphi(x\pm) := \lim_{t\to x^\pm}\varphi(t)$ denotes the respective one-sided limit. We refer to $C$ as the  Archimedean copula induced by $\varphi$ and call $C$ \emph{strict} if $\varphi(0+)=\infty$ and \emph{non-strict} otherwise.
	In what follows $\mathcal{C}_{ar}$ will denote the family of all bivariate Archimedean copulas. 

   Since $\varphi$ is convex obviously $\varphi(0) \geq \varphi(0+)$ holds.
	Defining the right-continuous version $\psi$ of $\varphi$ by
	$$
		\psi(t) := 
		\begin{cases}
		  \varphi(0+) & \text{if } t=0 
			\\
		  \varphi(t)  & \text{if } t \in (0,1]
		\end{cases}
	$$
	it is straightforward to verify that $\psi$ and $\varphi$ generate the same copula.
	In other words, the value of $\varphi$ at $0$ is irrelevant and we may, without loss of generality,  
	from now on assume that all generators are right-continuous at $0$.  
	Additionally, since for every generator $\varphi$ and every constant $a>0$ we have that $a \cdot \varphi$ generates the 
	same copula we will from now on also assume (without explicit reference) that the generator is normalized in the 
	sense that $\varphi(\frac{1}{2}) = 1$ holds. 
		
  Following \cite{p21,Nelsen} we define the $t$-level set $L_t$ of the Archimedean copula $C$ by 
	$$
	L_t := \lbrace (x,y)\in[0,1]^2: C(x,y) = t \rbrace
	$$
	and the $t$-level function $f^t: [t,1]\rightarrow[0,1]$ by $f^t(x) := \varphi^{-1}(\varphi(t)-\varphi(x))$ implying that
	\begin{align*}
	\text{graph}(f^t) = \lbrace (x,f^t(x)): x\in[t,1] \rbrace = \lbrace (x,y)\in[0,1]^2: C(x,y) = t \rbrace = L_t
	\end{align*}
	holds for every $t\in (0,1]$.
	
	For every generator $\varphi:[0,1] \rightarrow [0,\infty]$ we will let $D^+\varphi(x)$ ($D^-\varphi(x)$) denote the right-hand (left-hand) derivative of $\varphi$ at $x \in (0,1)$. Convexity of $\varphi$ implies that  
	$D^+\varphi(x)=D^-\varphi(x)$ holds for all but at most countably many $x \in (0,1)$, i.e. $\varphi$ is differentiable outside a countable subset of $(0,1)$, and that $D^+\varphi$ is non-decreasing and right-continuous (see \cite{kannan, pollard}). 
	In the sequel we will let $\text{Cont}(D^+\varphi) \subseteq (0,1)$ denote the set of all continuity points of $D^+\varphi$ in $(0,1)$ (by definition, $0$ and $1$ are not contained in $\text{Cont}(D^+\varphi))$ and make use of the fact that $[0,1] \setminus \text{Cont}(D^+ \varphi)$ is at most countably infinite and has Lebesgue measure $0$. Setting $D^+\varphi(0)=-\infty$ in case of strict $\varphi$ as well as $D^+\varphi(1)=0$ (for strict and non-strict $\varphi$) allows to view 
	$D^+\varphi$ as non-decreasing and right-continuous function on the full unit interval $[0,1]$. 
	Additionally (again see \cite{kannan,pollard})  we have $D^-\varphi(x)=D^+\varphi(x-)$ for every $x \in (0,1)$.
	
	If $\varphi$ is strict then according to \cite{p21} 
	\begin{align}\label{markov.strict}
	K_C(x,[0,y]) = \begin{cases}
	\frac{D^+\varphi(x)}{D^+\varphi(C(x,y))} & \text{if } x\in(0,1)\\
	1 & \text{if } x\in\lbrace 0,1\rbrace
	\end{cases} 
	\end{align}
	is (a version of) the Markov kernel of $C$, if $\varphi$ is non-strict, then 
	\begin{align}\label{markov.non.strict}
	K_C(x,[0,y]) = \begin{cases}
	0 & \text{if } x\in(0,1), y < f^0(x) \\
	\frac{D^+\varphi(x)}{D^+\varphi(C(x,y))} & \text{if } x\in(0,1), y\geq f^0(x)\\
	1 & \text{if } x\in\lbrace 0,1\rbrace
	\end{cases} 
	\end{align}
	is a (version of a) Markov kernel of $C$. 	
	Recall that for every Archimedean copula $C$ with generator $\varphi$ the Kendall distribution function is given by (see, e.g., \cite{GenestInference})
	\begin{align}\label{formulat.kendall.arch}
	F^\text{Kendall}(x) = x - \frac{\varphi(x)}{D^+\varphi(x)}.
	\end{align}
	
	We now prove in several steps that in $\mathcal{C}_{ar}$ weak conditional convergence and pointwise convergence coincide. 
	The following theorem serves as starting point for this result. Up to a slight modification this intermediate theorem was already established by Charpentier and Segers in \cite{convArch}, however, the slight modification will turn out to be crucial in the sequel.

	\begin{Th}\label{T1}
		Let $C, C_1, C_2,\ldots$ be Archimedean copulas with gene\-rators $\varphi, \varphi_1, \varphi_2,\ldots$, respectively. Then the following conditions are equivalent:
		\begin{enumerate}
			\item[(a)] $\lim\limits_{n\rightarrow\infty} C_n(x,y) = C(x,y)$ for all $x,y\in[0,1]$,
			\item[(b)] $\lim\limits_{n\rightarrow\infty} F_n^{\emph{Kendall}}(x) = F^{\emph{Kendall}}(x)$ for all $x\in$ \emph{Cont}$(D^+\varphi)$,
			\item[(c)] $\lim\limits_{n\rightarrow\infty}\varphi_n(x) = \varphi(x)$ for all $x\in(0,1]$,
			\item[(d)] $\lim\limits_{n\rightarrow\infty}D^+\varphi_n(x) = D^+\varphi(x)$ for all $x\in$ \emph{Cont}$(D^+\varphi)$.		 
		\end{enumerate}
	\end{Th}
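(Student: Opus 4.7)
My plan is to use (c) as the hub and prove (c) $\Leftrightarrow$ (d), (c) $\Leftrightarrow$ (a) and (c) $\Leftrightarrow$ (b) in turn. For (c) $\Leftrightarrow$ (d) I would rely on the classical convex-analysis fact (used implicitly in \cite{kannan,pollard}) that pointwise convergence of a sequence of convex functions on an open interval to a convex function $\varphi$ forces convergence of the right-hand derivatives at every continuity point of $D^+\varphi$. For the converse, I would start from the representation
\[
  \varphi_n(x) \;=\; 1 + \int_{1/2}^{x} D^+\varphi_n(s)\,\mathrm{d}\lambda(s),
\]
valid for any convex $\varphi_n$ with $\varphi_n(1/2)=1$, and apply dominated convergence on compact subintervals of $(0,1)$: monotonicity of $D^+\varphi_n$ combined with convergence at a single point close to $0$ in \emph{Cont}$(D^+\varphi)$ furnishes an integrable dominating function. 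The boundary value at $x=1$ is enforced by $\varphi_n(1)=\varphi(1)=0$, and the normalization at $x=1/2$ is built in.

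For (c) $\Rightarrow$ (a) I would use that right-continuity of $\varphi$ together with the definition of the pseudoinverse makes $\varphi^{-}:[0,\infty]\to[0,1]$ continuous on its entire domain, and that pointwise convergence of the normalized convex $\varphi_n$ on $(0,1]$ to $\varphi$ entails $\varphi_n^{-}(t_n)\to\varphi^{-}(t)$ whenever $t_n\to t$ (using that convergence of convex functions is automatically locally uniform in the interior, which eliminates the only potentially troublesome case $t=\varphi(0+)$); plugging in $t_n=\varphi_n(x)+\varphi_n(y)$ for $x,y\in(0,1]$ yields $C_n(x,y)\to C(x,y)$, while the case $x=0$ or $y=0$ is trivial.

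The converse (a) $\Rightarrow$ (c) is the \emph{main obstacle}, precisely because the copula $C_n$ is blind to the value $\varphi_n(0)$. My plan here is a Helly-type compactness plus uniqueness argument: by convexity together with the normalizations $\varphi_n(1/2)=1$ and $\varphi_n(1)=0$, the sequence $(\varphi_n)$ is locally uniformly bounded on $(0,1]$, so every subsequence has a further subsequence converging pointwise on a dense subset of $(0,1]$ and, again by convexity, pointwise on all of $(0,1]$, to some right-continuous, convex, strictly decreasing $\widetilde\varphi$ with $\widetilde\varphi(1/2)=1$ and $\widetilde\varphi(1)=0$. Applying the already established direction (c) $\Rightarrow$ (a) along the subsequence shows that $\widetilde\varphi$ generates $C$; uniqueness of the normalized right-continuous generator of $C$ then forces $\widetilde\varphi=\varphi$, and the standard subsequence principle promotes this to convergence of the full sequence $(\varphi_n)$ on $(0,1]$.

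Finally, for (c) and (d) together $\Rightarrow$ (b) I would simply plug the established convergences into formula~(\ref{formulat.kendall.arch}), using that $D^+\varphi(x)\in(-\infty,0)$ for $x\in(0,1)$ so that the ratio $\varphi(x)/D^+\varphi(x)$ behaves continuously in both arguments; this yields (b) at every $x\in$ \emph{Cont}$(D^+\varphi)$. The converse (b) $\Rightarrow$ (c) is essentially the content of Charpentier and Segers~\cite{convArch}: the Kendall distribution function together with the normalization $\varphi(1/2)=1$ determines the right-continuous generator uniquely (via~(\ref{formulat.kendall.arch}) viewed as an ODE for $\log\varphi$ on $(0,1)$), and combining their argument with the Helly/uniqueness step above closes the cycle. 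The ``slight modification'' foreshadowed in the excerpt is precisely that convergence of generators is asserted only on $(0,1]$, accommodating sequences whose values at $0$ do not converge.
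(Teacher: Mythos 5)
Your treatment of (c)$\Leftrightarrow$(d) is essentially the paper's own argument (convex-analysis continuity of right derivatives in one direction, and the integral representation of the convex generator plus dominated convergence with the monotonicity-supplied bound in the other), and your route from (c) to (a) and from (c),(d) to (b) via formula (\ref{formulat.kendall.arch}) is sound; the paper simply outsources (a)$\Leftrightarrow$(c) to Klement--Mesiar--Pap (Theorem 8.14 on $t$-norms) and (a)$\Leftrightarrow$(b) to Charpentier--Segers.

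The genuine gap is in your (a)$\Rightarrow$(c) step, which is precisely the direction the cited literature is needed for. You claim that convexity together with the normalizations $\varphi_n(\tfrac12)=1$ and $\varphi_n(1)=0$ already makes $(\varphi_n)_{n\in\mathbb{N}}$ locally uniformly bounded on $(0,1]$; this is false. Take $\varphi_n(x)=1+n(\tfrac12-x)$ on $[0,\tfrac12]$ and $\varphi_n(x)=2(1-x)$ on $[\tfrac12,1]$ (convex for $n\geq 2$, strictly decreasing, correctly normalized): then $\varphi_n(x)\to\infty$ for every $x<\tfrac12$, so no Helly-type extraction is available from the normalization alone. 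Boundedness (and likewise the claim that a subsequential limit $\widetilde\varphi$ is finite, strictly decreasing and hence again a generator -- a pointwise limit of generators can vanish on an interval or be infinite where boundedness fails) can only come from the hypothesis $C_n\to C$, and your sketch gives no mechanism for extracting it; but controlling the generators through the copulas is exactly the substance of (a)$\Rightarrow$(c), i.e.\ of the Charpentier--Segers/Ling-representation arguments. The same unproved compactness step is then reused in your (b)$\Rightarrow$(c) paragraph, so the cycle does not close as written. To repair it you would either have to prove the needed tightness/boundedness of $(\varphi_n)$ from pointwise convergence of the copulas (for instance by working with the uniformly bounded pseudoinverses $\varphi_n^{-}$ or with the Kendall distribution functions, which is in effect what the cited references do), or simply invoke those references as the paper does.
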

	\begin{proof}
	  The equivalence of (a) and (b) can be found in \cite[Proposition 2]{convArch} and 
		the equivalence of (a) and (c) is contained in \cite[Theorem 8.14]{tnorms}
		where the authors prove equivalence of pointwise convergence of the sequence of multiplicative generators and 
		the pointwise convergence of the induced continuous Archimedean $t$-norms which readily translates to the copula setting.
		The fact that (c) implies (d) is a direct consequence of the convexity of the generators, see \cite{rockafellar}. 	
		
		Finally, suppose that (d) holds and consider $x \in \textrm{Cont}(D^+\varphi)$. 
		For every $\varepsilon >0$ we can find an index $n_0 \in \mathbb{N}$ such that for all $n\geq n_0$ we have 
		$D^+\varphi(x) - \varepsilon < D^+\varphi_n(x)$. Using monotonicity of $D^+\varphi_n$ we get
		$$
		D^+\varphi(x) - \varepsilon \leq D^+\varphi_n(x) \leq D^+\varphi_n(t) \leq 0 
		$$
		for $n \geq n_0$ and every $t \in [x,1]$.
		Having this, Dominated convergence yields
		\begin{align*}
		-\varphi(y) 
		&= \int\limits_{[y,1]} D^+\varphi(t) \, \mathrm{d}t 
		 = \int\limits_{[y,1]} \lim_{n \rightarrow \infty} D^+\varphi_n(t) \,  \mathrm{d}t   
		= \lim_{n \rightarrow \infty} \int\limits_{[y,1]} D^+\varphi_n(t) \,  \mathrm{d}t 
		= -\lim_{n \rightarrow \infty} \varphi_n(y)
		\end{align*} 
		for every $y \in [x,1]$. 		
		Considering that $\textrm{Cont}(D^+\varphi)$ is dense in $(0,1]$ condition (c) now follows immediately.
	\end{proof}

	The afore-mentioned modification of the result in \cite{convArch} is that it may happen that
	a sequence of Archimedean copulas $(C_n)_{n\in\mathbb{N}}$ converges to an Archimedean copula $C$
	although the corresponding generators $(\varphi_n)_{n\in\mathbb{N}}$ do not converge to $\varphi$ in the point $0$.
		
	We now state the main result of this section saying 
	that pointwise convergence and weak conditional convergence coincide in $\mathcal{C}_{ar}$: 

	\begin{Th}\label{result}
		Let $C, C_1,C_2,\ldots$ be Archimedean copulas with gene\-rators $\varphi, \varphi_1,\varphi_2,\ldots$, respectively. Then the following assertions are equivalent:
		\begin{enumerate}
			\item[(a)] $\lim\limits_{n\rightarrow\infty} C_n(x,y) = C(x,y)$ for all $x,y\in[0,1]$,
			\item[(b)] $\lim\limits_{n\rightarrow\infty} F_n^{\emph{Kendall}}(x) = F^{\emph{Kendall}}(x)$ for all $x\in$ \emph{Cont}$(D^+\varphi)$,
			\item[(c)] $\lim\limits_{n\rightarrow\infty}\varphi_n(x) = \varphi(x)$ for all $x\in(0,1]$,
			\item[(d)] $\lim\limits_{n\rightarrow\infty}D^+\varphi_n(x) = D^+\varphi(x)$ for all $x\in$ \emph{Cont}$(D^+\varphi)$,
			\item[(e)] $\lim\limits_{n\rightarrow\infty}D_1(C_n,C) = 0$,
			\item[(f)] $C_n \xrightarrow{wcc} C$ for $n \rightarrow \infty$.
		\end{enumerate}
	\end{Th}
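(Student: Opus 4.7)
The plan is to exploit Theorem \ref{T1}, which already gives the equivalence of (a)--(d), and then weave (e) and (f) into the cycle by proving the implications
$$
(\text{f}) \;\Longrightarrow\; (\text{e}) \;\Longrightarrow\; (\text{a}) \qquad \text{and} \qquad (\text{d}) \;\Longrightarrow\; (\text{f}).
$$
The first two of these are essentially bookkeeping: (f) $\Rightarrow$ (e) is Lemma 7 of \cite{p06} (quoted just before Example \ref{ex:d1.not.wcc}), and (e) $\Rightarrow$ (a) follows because $D_1$-convergence implies $d_\infty$-convergence on $\mathcal{C}$ and hence pointwise convergence. The substantive work is therefore (d) $\Rightarrow$ (f), and this is where I would spend my effort.

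For (d) $\Rightarrow$ (f) the idea is to plug the convergence of $D^+\varphi_n$ into the explicit Markov kernel formulas (\ref{markov.strict}) and (\ref{markov.non.strict}). Fix $x \in \mathrm{Cont}(D^+\varphi) \cap (0,1)$; since the complement of this set is countable, it suffices to verify weak convergence of $K_{C_n}(x,\cdot)$ to $K_C(x,\cdot)$ for each such $x$. By Theorem \ref{T1} we already have $C_n(x,y) \to C(x,y)$ for every $y \in [0,1]$, and by (d) we have $D^+\varphi_n(x) \to D^+\varphi(x)$. The remaining ingredient is the following elementary fact about non-decreasing right-continuous functions: if $F_n \to F$ on $\mathrm{Cont}(F)$ and $t_n \to t$ with $t \in \mathrm{Cont}(F)$, then $F_n(t_n) \to F(t)$; one proves this by sandwiching $F_n(t_n)$ between $F_n(s_1)$ and $F_n(s_2)$ for continuity points $s_1 < t < s_2$ and letting $s_1, s_2 \to t$. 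Applying this to $F_n = D^+\varphi_n$, $F = D^+\varphi$ and $t_n = C_n(x,y)$, $t = C(x,y)$, valid whenever $C(x,y) \in \mathrm{Cont}(D^+\varphi)$, yields
$$
K_{C_n}(x,[0,y]) \;=\; \frac{D^+\varphi_n(x)}{D^+\varphi_n(C_n(x,y))} \;\longrightarrow\; \frac{D^+\varphi(x)}{D^+\varphi(C(x,y))} \;=\; K_C(x,[0,y]).
$$
Because $y \mapsto C(x,y)$ is strictly increasing wherever it is positive and $[0,1] \setminus \mathrm{Cont}(D^+\varphi)$ is countable, the set of $y$ for which this convergence holds is cocountable in $[0,1]$, hence dense. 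Pointwise convergence of these non-decreasing functions on a dense set containing $0$ and $1$ implies weak convergence of the associated probability measures $K_{C_n}(x,\cdot)$ to $K_C(x,\cdot)$, and (f) follows.

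The main obstacle I anticipate is the non-strict case, where (\ref{markov.non.strict}) forces a careful treatment of the threshold $y = f^0(x)$ at which $K_C(x,\cdot)$ carries an atom, and where $D^+\varphi_n(0+)$ need not converge to $D^+\varphi(0+)$ (the whole point of the refinement in Theorem \ref{T1}). What saves the argument is that the ratio formula is still valid on $\{y \geq f_n^0(x)\}$ and $\{y \geq f^0(x)\}$ respectively, the boundary curves $f_n^0$ converge to $f^0$ in view of (c), and the dense set of continuity points cleanly separates values above and below $f^0(x)$. The rest of the argument is routine once the monotone-function convergence lemma above is in hand.
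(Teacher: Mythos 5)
Your cycle $(a)\Leftrightarrow\cdots\Leftrightarrow(d)$, $(d)\Rightarrow(f)\Rightarrow(e)\Rightarrow(a)$ is exactly the paper's structure, and your argument above the zero level curve (the ratio formula combined with the fact that for convex functions pointwise convergence of $D^+\varphi_n$ on $\mathrm{Cont}(D^+\varphi)$ upgrades to convergence along sequences $t_n\to t\in\mathrm{Cont}(D^+\varphi)$) is essentially the paper's Lemma \ref{main}, Proof (1). The problem is the non-strict case, and precisely at the point where you wave it away: you claim that ``the boundary curves $f_n^0$ converge to $f^0$ in view of (c)''. This is false in general, and the paper goes out of its way to stress it: condition (c) is convergence of the generators on $(0,1]$ only, and the announced refinement of the Charpentier--Segers result is exactly that one may have $C_n\to C$ with $\varphi_n(0)\not\to\varphi(0)$ (e.g.\ strict $C_n$, so $f_n^0\equiv 0$, converging to a non-strict $C$ with $f^0(x)>0$). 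Lemma \ref{levelConvergence} gives $f_n^t\to f^t$ only for $t>0$, and needs the extra hypothesis $\varphi_n(0)\to\varphi(0)$ for $t=0$; Lemma \ref{direction}(ii) only yields $\limsup_n f_n^0(x)\le f^0(x)$. So for $y\in(0,f^0(x))$ (which \emph{are} continuity points of the limit conditional distribution function, hence cannot be skipped when checking weak convergence) you may have $y\ge f_n^0(x)$ for all $n$, the kernel is then the ratio $D^+\varphi_n(x)/D^+\varphi_n(C_n(x,y))$ with $C_n(x,y)>0$ but $C(x,y)=0\notin\mathrm{Cont}(D^+\varphi)$, and your monotone-function lemma gives nothing because the denominator's argument converges to a point where $D^+\varphi$ need not be approximated by $D^+\varphi_n$ at all.

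This is not a routine fix; it is where the paper's real work lies (Lemma \ref{main2}). There, one splits into subsequences along which $\varphi_{n_j}(0)\to\varphi(0)$ (your easy case, handled via Lemma \ref{levelConvergence}) versus $\varphi_{n_j}(0)\to\infty$ or $\to\alpha>\varphi(0)$ (using Lemma \ref{direction}(i) and the reduction Lemma \ref{subsequences}). In the latter cases the proof is measure-theoretic rather than kernel-formula based: assuming $K_{C_{n_j}}(x,[0,y])\ge\delta>0$ along a subsequence, monotonicity of $u\mapsto D^+\psi(u)/D^+\psi(t)$ and convexity of the level curves force a mass of at least $\delta\cdot\Delta>0$ into the strip $M_\varepsilon=\{(a,b):b\le f^0(a)-\varepsilon\}$, contradicting $\mu_{C_{n_j}}(M_\varepsilon)\to\mu_C(M_\varepsilon)=0$, which follows from pointwise convergence via Portmanteau since $M_\varepsilon$ is a $\mu_C$-continuity set. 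Without an argument of this kind (or an explicit extra hypothesis $\varphi_n(0)\to\varphi(0)$, which would weaken the theorem to something strictly less than what is stated), your proof of $(d)\Rightarrow(f)$ is incomplete for non-strict limit copulas.
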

	
	\begin{Rem}
	Since Archimedean copulas are symmetric Theorem \ref{result} implies that 
	within the class of all Archimedean copulas weak conditional convergence and 
	$\partial$-convergence are equivalent (compare with Remark \ref{delta.convergence.rem1}).
	\end{Rem}
	
	We are now going to prove Theorem \ref{result} 
	by showing that each of the conditions (a) to (d) from Theorem \ref{T1} implies weak conditional convergence.	
	Doing so we will work with level curves and distinguish two cases concerning the $0$-level curve $f^0$ of the limit copula.
	For the first case, we present two different proofs since they use different ideas - the first one 
	builds upon convexity of the generators and direct consequences to the sequence of derivatives, the second one 
	uses some additional information about the behaviour of the corresponding sequence of level curves 
	as described in the next lemma: \pagebreak
	
	\begin{Lemma}\label{direction}
		Let $C,C_1, C_2,\ldots$ be non-strict Archimedean copulas with generators 
		\linebreak $\varphi, \varphi_1, \varphi_2, \ldots$, respectively. If $(\varphi_n)_{n\in\mathbb{N}}$ converges pointwise to $\varphi$ on $(0,1]$ then
		the following two assertions hold:
		\begin{enumerate}
			\item[(i)] $\liminf\limits_{n\rightarrow \infty} \varphi_n(0) \geq \varphi(0)$,
			\item[(ii)] $f^t(x) \geq \limsup\limits_{n \rightarrow \infty} f_n^t(x)$ for every $t\in[0,1)$ and every $x\in [t,1]$.
		\end{enumerate} 
	\end{Lemma}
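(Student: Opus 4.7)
The plan is to prove (i) directly from monotonicity of the generators together with the right-continuity convention $\varphi(0)=\varphi(0+)$, and to prove (ii) by translating statements about the level functions into statements about the copulas themselves, where Theorem \ref{T1} provides pointwise convergence $C_n\to C$ for free.

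For (i) I would fix an arbitrary $x\in(0,1]$, use $\varphi_n(0)\ge\varphi_n(x)$ by monotonicity, pass to $\liminf$ using the assumed pointwise convergence $\varphi_n(x)\to\varphi(x)$ to obtain $\liminf_{n\to\infty}\varphi_n(0)\ge\varphi(x)$, and then let $x\downarrow0$, invoking right-continuity of $\varphi$ at $0$ to conclude $\liminf_{n\to\infty}\varphi_n(0)\ge\varphi(0+)=\varphi(0)$.

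For (ii) the strategy is the following. Fix $t\in[0,1)$ and $x\in[t,1]$, and set $y_n:=f_n^t(x)$; a short check with the pseudoinverse shows that $C_n(x,y_n)=t$ by construction. Extract a subsequence with $y_{n_k}\to y^*:=\limsup_{n\to\infty} y_n$. Using that every copula is $1$-Lipschitz in its second argument I get
\[
|C_{n_k}(x,y^*)-C_{n_k}(x,y_{n_k})|\ \le\ |y^*-y_{n_k}|\ \xrightarrow{k\to\infty}\ 0,
\]
and combining this with the pointwise convergence $C_{n_k}(x,y^*)\to C(x,y^*)$ provided by Theorem \ref{T1} yields $C(x,y^*)=t$. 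For $t>0$ the equation $C(x,\cdot)=t$ has the unique solution $f^t(x)$, so $y^*=f^t(x)$; for $t=0$ the equivalence $C(x,y)=0\iff\varphi(x)+\varphi(y)\ge\varphi(0)$, together with strict monotonicity of $\varphi$, translates $C(x,y^*)=0$ into $y^*\le f^0(x)$. In either case $\limsup_{n\to\infty} f_n^t(x)=y^*\le f^t(x)$.

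The main subtlety is the $t=0$ case: one cannot simply take limits inside the defining formula $f_n^0(x)=\varphi_n^{-1}(\varphi_n(0)-\varphi_n(x))$, because the sequence $(\varphi_n(0))_{n\in\mathbb{N}}$ is not a priori controlled under our hypotheses, which is precisely the content of part (i). Routing the argument through pointwise convergence of the copulas and a one-line Lipschitz estimate sidesteps this obstruction cleanly and uniformly in $t$.
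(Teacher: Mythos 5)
Your proposal is correct, but it takes a genuinely different route from the paper's proof. For (i) the paper does not argue via plain monotonicity: it uses the convexity bound $\varphi_n(0)\ge\varphi_n(z)-z\,D^+\varphi_n(z)$ together with the implication (c)$\Rightarrow$(d) of Theorem \ref{T1} (convergence of the right-hand derivatives at continuity points) and then chooses $z$ close to $0$; your argument via $\varphi_n(0)\ge\varphi_n(x)$ and the standing normalization $\varphi(0)=\varphi(0+)$ is simpler and avoids derivatives altogether (note that this normalization is genuinely needed for (i) -- the paper's own proof uses it in its last step as well). For (ii) the paper stays entirely on the generator level: $y>f^t(x)$ is equivalent to $\varphi(x)+\varphi(y)<\varphi(t)$, and pointwise convergence of the generators (plus part (i) when $t=0$, to compare against $\varphi_n(0)$) yields $y>f_n^t(x)$ for all large $n$, hence $y\ge\limsup_n f_n^t(x)$ -- a two-line argument requiring nothing beyond the hypothesis. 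Your route instead passes through the copulas: the identity $C_n(x,f_n^t(x))=t$, a compactness/subsequence extraction, the $1$-Lipschitz property, the implication (c)$\Rightarrow$(a) of Theorem \ref{T1} (generator convergence implies copula convergence, which the paper imports from the $t$-norm literature), and the identification $L_t=\mathrm{graph}(f^t)$ for $t>0$ together with the zero-set characterization $C(x,y)=0\iff\varphi(x)+\varphi(y)\ge\varphi(0)$ for $t=0$. This is heavier machinery, but it is legitimate (Theorem \ref{T1} is proved independently of Lemma \ref{direction}, so there is no circularity), it treats $t>0$ and $t=0$ uniformly, it does not need part (i) as an input to part (ii), and for $t>0$ it even delivers the equality $\lim_n f_n^t(x)=f^t(x)$, anticipating Lemma \ref{levelConvergence}.
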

	\begin{proof}
		To prove assertion (i) we proceed as follows: 
		Considering that for every ge\-ne\-rator $\psi$ and every $z\in (0,1)$ by convexity we have 
		$\psi(0)\geq \psi(z) - z\cdot D^+\psi(z)$ and applying Theorem \ref{T1} to the case $z \in \textrm{Cont}(D^+\varphi)$
		yields
		\begin{align*}
		\liminf_{n\rightarrow \infty} \varphi_n(0) \geq \liminf_{n\rightarrow \infty} 
		\left( \varphi_n(z) - z\cdot D^+\varphi_n(z)\right) = \varphi(z) - z\cdot D^+\varphi(z) > \varphi(z).
		\end{align*} 
		Since for every $\varepsilon>0$ we can choose $z \in \textrm{Cont}(D^+\varphi)$ in such a way that 
		$\varphi(z)> \varphi(0)-\varepsilon$	holds assertion (i) now follows.
		To prove assertion (ii) fix $t\in[0,1)$ and consider $y> f^t(x)$. In this case we have 
		$\varphi(x)+\varphi(y) < \varphi(t)$ implying that there exists an index $n_0 \in \mathbb{N}$ such that 
		$\varphi_n(x)+\varphi_n(y) < \varphi_n(t)$, hence $y> f_n^t(x)$, holds for every $n\geq n_0$. 
		It follows that $y\geq \limsup_{n \rightarrow \infty} f_n^t(x)$ from which the assertion follows immediately since 
		$y> f^t(x)$ was arbitrary. 
	\end{proof}

	We now use the previous result to show level curve convergence:
	
	\begin{Lemma}\label{levelConvergence}
		Let $C,C_1, C_2, \ldots$ be Archimedean copulas with gene\-rators $\varphi, \varphi_1, \varphi_2, \ldots$ converging pointwise on $(0,1]$. Then for every $t>0$ the $t$-level curves converge pointwise, i.e., 
		\begin{align}\label{eq:cont.levelt}
		\lim_{n \rightarrow \infty} f^t_n(x) = f^t(x)
		\end{align}
		holds for all $x\in[t,1]$. If, in addition, $\lim_{n \rightarrow \infty} \varphi_n(0)=\varphi(0)$ holds then 
		eq. (\ref{eq:cont.levelt}) is also true for $t=0$ and $x \in [0,1]$.
	\end{Lemma}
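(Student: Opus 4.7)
The plan is to prove pointwise convergence of the $t$-level functions via a sandwich argument: for each fixed $x$ I would separately establish $\limsup_{n\to\infty} f_n^t(x) \leq f^t(x)$ and $\liminf_{n\to\infty} f_n^t(x) \geq f^t(x)$, exploiting the fact that the level curve is characterized by an equality of generator sums, $\varphi(x)+\varphi(f^t(x)) = \varphi(t)$, which by strict monotonicity of $\varphi$ becomes a strict inequality when $y$ is moved off the curve. Pointwise convergence of $\varphi_n$ then transports these strict inequalities to the approximating copulas.

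First I would dispose of the case $t > 0$, where all relevant quantities $\varphi(x)$, $\varphi(y)$, $\varphi(t)$ are finite for $x,y,t \in (0,1]$ and where pointwise convergence of $(\varphi_n)_{n\in\mathbb{N}}$ is already assumed on $(0,1]$. For any $y \in (f^t(x),1]$, strict decrease of $\varphi$ yields $\varphi(y) < \varphi(f^t(x)) = \varphi(t)-\varphi(x)$, hence $\varphi(x)+\varphi(y) < \varphi(t)$; applying pointwise convergence in the three points $x,y,t$ gives $\varphi_n(x)+\varphi_n(y) < \varphi_n(t)$ for all sufficiently large $n$, which (after noting that for $t>0$ and $x\in[t,1]$ the argument $\varphi_n(t)-\varphi_n(x)$ stays strictly below $\varphi_n(0+)$, so $\varphi_n^{-}$ acts as an honest inverse) translates into $y > f_n^t(x)$. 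Letting $y \downarrow f^t(x)$ furnishes the upper bound. The lower bound is symmetric: since $f^t(x) \geq t > 0$, one may pick $y \in (0, f^t(x))$, deduce $\varphi(x)+\varphi(y) > \varphi(t)$, and transfer to obtain $y < f_n^t(x)$ for large $n$.

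Second, for $t = 0$ I would run the same scheme while using the extra hypothesis $\varphi_n(0) \to \varphi(0)$ to transfer inequalities involving $\varphi(0)$. If $C$ is strict, then $\varphi(0) = \infty$ and $f^0(x) = 0$ for $x > 0$, so for any $y > 0$ the quantity $\varphi_n(x)+\varphi_n(y)$ is bounded while $\varphi_n(0) \to \infty$, yielding $\varphi_n(x)+\varphi_n(y) < \varphi_n(0)$ and thus $f_n^0(x) < y$ eventually, whence $f_n^0(x) \to 0$. If $C$ is non-strict, the argument of the $t>0$ case applies verbatim with $t=0$, since now every appearance of $\varphi_n(0)$ can be compared against the finite limit $\varphi(0)$. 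The trivial boundary $x=0$ is handled directly via $f^0(0) = \varphi^{-1}(0) = 1 = f_n^0(0)$.

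The only real subtlety, rather than an obstacle, is bookkeeping around the pseudoinverse $\varphi_n^{-}$: one must confirm that the arguments $\varphi_n(t)-\varphi_n(x)$ that arise lie in the range where $\varphi_n^{-}$ coincides with $\varphi_n^{-1}$, so that the equivalence between inequalities of the form $\varphi_n(y) \gtrless \varphi_n(t)-\varphi_n(x)$ and $y \lessgtr f_n^t(x)$ is valid. This is automatic for $t>0$ from $\varphi_n(t) < \varphi_n(0+)$, and for the case $t=0$ it follows from the structural dichotomy between the strict and non-strict cases handled above. The point where the additional hypothesis $\varphi_n(0)\to \varphi(0)$ becomes indispensable is precisely this: Lemma \ref{direction}(i) alone delivers only $\liminf_n \varphi_n(0) \geq \varphi(0)$, which is insufficient to transfer strict inequalities at level zero.
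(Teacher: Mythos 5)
Your proposal is correct; it differs from the paper essentially only in how the case $t>0$ is handled. The paper obtains eq. (\ref{eq:cont.levelt}) for $t>0$ by quoting, as a by-product of \cite[Theorem 8.14]{tnorms}, uniform convergence of the pseudoinverses $\varphi_n^{-}\to\varphi^{-}$ on compact intervals $[0,s]$ and then feeding in the convergent arguments $z_n=\varphi_n(t)-\varphi_n(x)$; you instead sandwich $f_n^t(x)$ directly by transferring the strict inequalities $\varphi(x)+\varphi(y)<\varphi(t)$ (for $y$ just above $f^t(x)$) and $\varphi(x)+\varphi(y)>\varphi(t)$ (for $y$ below) to the $\varphi_n$ via pointwise convergence. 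That transfer mechanism is exactly what the paper uses one-sidedly in Lemma \ref{direction}(ii) and for the lower bound at level $0$, so your argument is self-contained and more elementary -- it needs neither the t-norm result nor Lemma \ref{direction} -- at the cost of the pseudoinverse bookkeeping you correctly record ($\varphi_n(t)-\varphi_n(x)\le\varphi_n(t)<\varphi_n(0+)$ for $t>0$, so $\varphi_n^{-}$ acts as a genuine inverse at the relevant arguments); the paper's route is shorter only because Theorem \ref{T1} already leans on \cite{tnorms}. Your treatment of $t=0$ coincides with the paper's: the strict limit via $\varphi_n(0)\to\infty$ (dismissed as trivial there), the non-strict limit via transferring $\varphi(x)+\varphi(y)>\varphi(0)$ using the extra hypothesis (which also makes the $C_n$ eventually non-strict, so the inequality indeed yields $y<f_n^0(x)$), and your closing remark that Lemma \ref{direction}(i) alone would not suffice is precisely why the hypothesis is imposed. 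The only phrase worth tightening is ``applies verbatim'': at $t=0$ your justification $f^t(x)\ge t>0$ for picking $y\in(0,f^0(x))$ is no longer available; this is harmless, since for a non-strict limit one has $f^0(x)>0$ for every $x\in(0,1)$ and at the endpoints the claim is trivial, but it should be stated rather than inherited from the $t>0$ wording.
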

	\begin{proof}
		Suppose that $t>0$. As a by-product of \cite[Theorem 8.14]{tnorms} we obtain uniform convergence of the sequence 
		$(\varphi_n^{-})_{n \in \mathbb{N}}$ to $\varphi^-$ on each compact interval of the form $[0,s]$ with $s\in[0,\infty)$.
		Fix $x \in [t,1]$, set $s:=2\sup_{n \in \mathbb{N}} \varphi_n(t) < \infty$ and define 
		$z_n := \varphi_n(t) - \varphi_n(x)$. Then $(z_n)_{n \in \mathbb{N}}$ converges to $z:=\varphi(t) - \varphi(x)$ for 
		$n\to\infty$ and, using the afore-mentioned uniform convergence of $(\varphi_n^{-})_{n \in \mathbb{N}}$ 
		on $[0,s]$, the equality $\lim_{n \rightarrow \infty} f^t_n(x) = f^t(x)$ follows.\\
		Notice that the second assertion is trivial for strict $\varphi$, so it remains to prove the assertion for $\varphi(0)<\infty$. Fix $x \in (0,1)$. If $f^0(x)=0$ then the result follows directly from Lemma \ref{direction} part (ii). 
		Suppose therefore that $f^0(x)>0$. Then for every 
		$y \in (0,f^0(x))$ we have $\varphi(x) + \varphi(y)> \varphi(0)$ and we can find an index $n_0 \in \mathbb{N}$ 
		such that $\varphi_n(x) + \varphi_n(y) > \varphi_n(0)$, hence $y<f^0_n(x)$, holds for every $n\geq n_0$. 
		As direct consequence we get $f^0(x) \leq \liminf_{n\rightarrow \infty} f_n^0(x)$, which in combination 
		with Lemma \ref{direction} assertion (ii) yields
		\begin{align*}
		f^0(x) \leq \liminf_{n\rightarrow \infty} f_n^0(x) \leq \limsup_{n\rightarrow \infty} f_n^0(x) \leq f^0(x).
		\end{align*}
		This completes the proof.	
	\end{proof}
	
	Recall that for univariate distribution functions $F,F_1,F_2,\ldots$ weak convergence of $(F_n)_{n \in \mathbb{N}}$ to $F$ is equivalent to pointwise convergence on a dense subset (see, e.g., \cite{billingsley}). In the following two lemmata we prove convergence on a dense set above and below the zero level curve $f^0$ of the limit copula $C$. Notice that the first lemma is sufficient within the family of strict Archimedean copulas since in this case $f^0(x)=0$ for every $x \in (0,1]$. 
	
	\begin{Lemma}\label{main}
		Let $C,C_1, C_2, \ldots$ be Archimedean copulas with generators $\varphi, \varphi_1, \varphi_2, \ldots$ and assume that one of the conditions of Theorem \ref{T1} holds. Then there exists a set $\Lambda \in \mathcal{B}([0,1])$ fulfilling $\lambda(\Lambda)=1$ such that for every $x \in \Lambda$ we have that 
		\begin{align}
		\lim_{n \rightarrow \infty}K_{C_n}(x,[0,y]) = K_{C}(x,[0,y])
		\end{align}
		holds for every $y \in U_x \subseteq [f^0(x),1]$, where $U_x$ is dense in $[f^0(x),1]$.
	\end{Lemma}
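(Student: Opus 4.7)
The plan is to exploit the explicit formulas (\ref{markov.strict})/(\ref{markov.non.strict}) for the Archimedean Markov kernel and reduce the claim to the pointwise convergence of $D^+\varphi_n$ at continuity points, which Theorem \ref{T1}(d) already grants. Set $\Lambda:=\mathrm{Cont}(D^+\varphi)\cap(0,1)$; since convexity forces $[0,1]\setminus\mathrm{Cont}(D^+\varphi)$ to be at most countable, we get $\lambda(\Lambda)=1$. For each $x\in\Lambda$ define
\begin{align*}
U_x := \{y\in(f^0(x),1] : C(x,y)\in\mathrm{Cont}(D^+\varphi)\}.
\end{align*}
Density of $U_x$ in $[f^0(x),1]$ is immediate: the map $y\mapsto C(x,y)=\varphi^{-1}(\varphi(x)+\varphi(y))$ is a continuous, strictly increasing bijection from $[f^0(x),1]$ onto $[0,x]$, so $[f^0(x),1]\setminus U_x$ is the preimage of the (at most) countable set $[0,x]\setminus\mathrm{Cont}(D^+\varphi)$ under a homeomorphism, hence countable.

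Now fix $x\in\Lambda$ and $y\in U_x$. Because $C(x,y)>0$ and Theorem \ref{T1}(a) yields $C_n(x,y)\to C(x,y)$, we have $C_n(x,y)>0$ and therefore $y>f^0_n(x)$ for all sufficiently large $n$. Consequently the non-strict/strict kernel formulas reduce for such $n$ to
\begin{align*}
K_{C_n}(x,[0,y]) \;=\; \frac{D^+\varphi_n(x)}{D^+\varphi_n(C_n(x,y))}.
\end{align*}
The numerator tends to $D^+\varphi(x)$ by Theorem \ref{T1}(d) since $x\in\mathrm{Cont}(D^+\varphi)$. Moreover, because $x<1$ gives $C(x,y)\in(0,x)\subset(0,1)$, convexity of $\varphi$ ensures that $D^+\varphi(C(x,y))$ is a finite, strictly negative number, so once the denominator is shown to converge to $D^+\varphi(C(x,y))$, taking quotients yields the conclusion.

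The one genuinely delicate step, and thus the main obstacle, is showing the moving-point convergence $D^+\varphi_n(C_n(x,y))\to D^+\varphi(C(x,y))$, since Theorem \ref{T1}(d) only provides convergence at a fixed continuity point. The idea is a standard monotone-function sandwich, made available precisely because $C(x,y)\in\mathrm{Cont}(D^+\varphi)$: given $\varepsilon>0$, pick $s_1,s_2\in\mathrm{Cont}(D^+\varphi)$ with $s_1<C(x,y)<s_2$ and $|D^+\varphi(s_2)-D^+\varphi(s_1)|<\varepsilon$; for all $n$ large enough, $s_1<C_n(x,y)<s_2$, so monotonicity of $D^+\varphi_n$ gives
\begin{align*}
D^+\varphi_n(s_1) \;\le\; D^+\varphi_n(C_n(x,y)) \;\le\; D^+\varphi_n(s_2).
\end{align*}
Applying Theorem \ref{T1}(d) at $s_1$ and $s_2$, the outer terms converge to values within $\varepsilon$ of $D^+\varphi(C(x,y))$, and letting $\varepsilon\downarrow 0$ yields the required convergence of the denominator, completing the proof.
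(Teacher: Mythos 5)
Your proof is correct and follows essentially the same route as the paper's first proof of Lemma \ref{main}: same choice $\Lambda=\mathrm{Cont}(D^+\varphi)$, same set $U_x=\{y: C(x,y)\in\mathrm{Cont}(D^+\varphi)\}$ with density via the homeomorphism $y\mapsto C(x,y)$, and the same reduction to convergence of the quotient $D^+\varphi_n(x)/D^+\varphi_n(C_n(x,y))$. The only difference is that where the paper cites Rockafellar for continuous convergence of the derivatives of convex functions at continuity points, you prove that step directly via the monotonicity sandwich (and you also make explicit that $C_n(x,y)>0$, hence $y>f^0_n(x)$, for large $n$), which is a fine, slightly more self-contained rendering of the same argument.
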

	\begin{proof}[Proof (1)]
		Setting $\Lambda:=\textrm{Cont}(D^+\varphi)$ we obviously have $\lambda(\Lambda)=1$. We are going to prove the even
		stronger property that for every $x \in \Lambda$
		the identity
		\begin{align}
		\lim_{n \rightarrow \infty} |K_{C_n}(x,[0,y]) - K_C(x,[0,y])| &= \lim_{n \rightarrow \infty} \bigg| \frac{D^+\varphi_n(x)}{D^+\varphi_n(C_n(x,y))} - \frac{D^+\varphi(x)}{D^+\varphi(C(x,y))} \bigg| \\ &= 0 \nonumber
		\end{align}
		holds for $\lambda$-almost all $y \in [f^0(x),1]$. 
		First of all notice that the set
		\begin{align*}
		U_x := \lbrace y \in [f^0(x),1]: C(x,y) \in\text{ Cont}(D^+\varphi) \rbrace
		\end{align*}
		is of full measure in $[f^0(x),1]$. 
		In fact, for $x \in (0,1)$ the function $h_x:[f^0(x),1] \rightarrow [0,x]$, defined by $h_x(y) := C(x,y)$ is an increasing homeomorphism (see \cite{Nelsen}) and therefore the set
			$h_x^{-1}(\text{Cont}(D^+\varphi)^c)$ is at most countably infinite.
		 Convexity of the generators implies that the sequence $(D^+\varphi_n)_{n\in\mathbb{N}}$ of 
		 derivatives converges continuously to 
		$D^+\varphi$ on Cont$(D^+\varphi)$ (see \cite{rockafellar}), hence we obtain
		\begin{align*}
		\lim_{n \rightarrow \infty} D^+\varphi_n(C_n(x,y)) = D^+\varphi(C(x,y))
		\end{align*}
		from which the desired property follows.
	\end{proof}
	\begin{proof}[Proof (2)]
		First of all notice that for $t,x \in \textrm{Cont}(D^+\varphi)$ we have
		\begin{align}\label{eq:temp.kernel.expression}
		K_{C_n}(x,[0,f_n^t(x)]) = \frac{D^+\varphi_n(x)}{D^+\varphi_n(C(x,f^t_n(x)))} = 	\frac{D^+\varphi_n(x)}{D^+\varphi_n(t)}
		\end{align}
		and the right-hand side converges to $K_C(x,[0,f^t(x)])$ by Theorem \ref{T1}. Exploiting this fact we consider 
		$x \in \textrm{Cont}(D^+\varphi)$ and proceed as follows:
		Fix $\varepsilon>0$ again and let $y \in  [f^0(x),1]$ denote a continuity point of the map $v\mapsto K_C(x,[0,v])$. 
		Furthermore choose $t,s \in \textrm{Cont}(D^+\varphi)$ with $t<s$ in such a way that $y \in (f^t(x), f^s(x))$ and 
		$K_C(x,[f^t(x),f^s(x)]) < \varepsilon$ holds. According to Lemma \ref{levelConvergence} there exists some index $n_0 \in \mathbb{N}$ such that for all $n\geq n_0$ we have $y < f_n^s(x)$, which using eq. (\ref{eq:cont.levelt}) implies 
		\begin{align*}
		K_{C_n}(x,[0,y]) \leq K_{C_n}(x,[0,f_n^s(x)]) \xrightarrow{n\to\infty} K_C(x,[0,f^s(x)]) \leq K_C(x,[0,y]) +\varepsilon.
		\end{align*}
		As a direct consequence we get
		\begin{align*}
		\limsup_{n\rightarrow \infty}& \,K_{C_n}(x,[0,y]) \leq K_C(x,[0,y]) + \varepsilon.
		\end{align*}
		Replacing $s$ by $t$ and proceeding analogously yields 
		\begin{align*}
		\liminf_{n\rightarrow \infty} & \, K_{C_n}(x,[0,y]) \geq K_C(x,[0,y]) - \varepsilon,
		\end{align*}
		which completes the proof.
	\end{proof}
	
	As second step we consider the case $0< y < f^0(x)$ (implying that $C$ is non-strict). 
	The following simple lemma will be crucial for the proof of Lemma \ref{main2}:
	\begin{Lemma}\label{subsequences}
			Suppose that $(X,d_1)$ is a compact metric space and that $(Y,d_2)$ is another (not necessarily compact) metric space. Furthermore let $f:X\rightarrow Y$ be an arbitrary function and $(x_n)_{n\in\mathbb{N}}$ a sequence in $(X,d_1)$. 
			If there exists some $y \in Y$ such that for every convergent subsequence $(x_{n_j})_{j\in\mathbb{N}}$ we have $\lim_{j \rightarrow \infty} d_2(f(x_{n_j}),y)=0$ then $\lim_{n \rightarrow \infty}d_2(f(x_n),y)=0$ follows. 
	\end{Lemma}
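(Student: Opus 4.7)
The plan is to argue by contradiction, leveraging compactness of $(X,d_1)$ to reduce an arbitrary ``bad'' subsequence to a convergent one, where the hypothesis can be brought to bear.

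First I would suppose, for contradiction, that $d_2(f(x_n),y)$ does not tend to $0$. Unpacking the negation of convergence furnishes some $\varepsilon>0$ together with a subsequence $(x_{n_k})_{k\in\mathbb{N}}$ of $(x_n)_{n\in\mathbb{N}}$ such that $d_2(f(x_{n_k}),y)\geq \varepsilon$ for every $k\in\mathbb{N}$. Since $(X,d_1)$ is compact, the Bolzano--Weierstrass property applied to $(x_{n_k})_{k\in\mathbb{N}}$ yields a convergent further subsequence $(x_{n_{k_j}})_{j\in\mathbb{N}}$ with some limit $x^\star \in X$. The key observation is that $(x_{n_{k_j}})_{j\in\mathbb{N}}$ is itself a convergent subsequence of the original sequence $(x_n)_{n\in\mathbb{N}}$, so the hypothesis of the lemma may be invoked directly to conclude that $\lim_{j\to\infty} d_2(f(x_{n_{k_j}}),y)=0$. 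This contradicts $d_2(f(x_{n_{k_j}}),y)\geq \varepsilon$ for every $j\in\mathbb{N}$, so the initial assumption must fail and $\lim_{n\to\infty} d_2(f(x_n),y)=0$ follows.

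There is no real obstacle in carrying out this argument; it is a classical subsequence-of-subsequence principle. The only subtle point worth emphasising is that $f$ is an \emph{arbitrary} function, so no continuity at $x^\star$ is (or can be) used: the argument never claims $f(x^\star)=y$, it merely exploits the fact that every subsequence with a cluster point in $X$ lands in the regime controlled by the hypothesis, and compactness guarantees that every subsequence of $(x_n)$ has such a cluster point.
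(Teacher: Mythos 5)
Your proposal is correct and follows essentially the same route as the paper: assume non-convergence, extract a subsequence bounded away from $y$ by some $\varepsilon>0$, use compactness of $(X,d_1)$ to pass to a convergent further subsequence, and contradict the hypothesis. No further comment is needed.
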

	\begin{proof}
			Suppose that the assumptions of the lemma are fulfilled but the sequence $(y_n)_{n \in \mathbb{N}}$ with
			$y_n := f(x_n)$ does not converge to $y$ for $n\to\infty$. 
			In this case there exists some $\varepsilon>0$ and a subsequence $(y_{n_j})_{j \in \mathbb{N}}$ with
			$d_2(y_{n_j}, y) \geq \varepsilon$ for every $j\in\mathbb{N}$. 
			Compactness of $(X,d_1)$ implies the existence of a subsequence $(x_{n_{j_l}})_{l\in\mathbb{N}}$ 
			of $(x_{n_j})_{j\in\mathbb{N}}$, and, by assumption, this sequence fulfills  
			$$
			\lim_{l \rightarrow \infty} d_2(y_{n_{j_l}},y)=\lim_{l \rightarrow \infty} d_2(f(x_{n_{j_l}}),y)=0,
			$$
			a contradiction. 
	\end{proof}
	
	To simplify notation we say that 
	$\lim_{n\to\infty} \varphi_n(0) = \infty $ if, and only if for every $N \in \mathbb{N}$ there exists some 
	index $n_0\in\mathbb{N}$ such that for all $n\geq n_0$ we have $\varphi_n(0) > N$.
   Lemma \ref{direction} part (i) together with Lemma \ref{subsequences} allow us to distinguish the following three types of convergent subsequences $(\varphi_{n_j})_{j \in \mathbb{N}}$ of $(\varphi_{n})_{n \in \mathbb{N}}$: (a) $\lim_{j \rightarrow \infty} \varphi_{n_j}(0)=\varphi(0)$, 
	(b) $\lim_{j \rightarrow \infty}\varphi_{n_j}(0) =\infty$ or 
	(c) $\lim_{j \rightarrow \infty}\varphi_{n_j}(0) = \alpha \in (\varphi(0),\infty)$.  
	
	\begin{Lemma}\label{main2}
		Let $C,C_1, C_2, \ldots$ be Archimedean copulas with generators $\varphi, \varphi_1, \varphi_2, \ldots$ and assume that one of the conditions of Theorem \ref{T1} holds. Then there exists a set $\Lambda \in \mathcal{B}([0,1])$ fulfilling $\lambda(\Lambda)=1$ such that for every $x \in \Lambda$ we have that 
		\begin{align}
		\lim_{n \rightarrow \infty}K_{C_n}(x,[0,y]) = 0 = K_{C}(x,[0,y])
		\end{align}	holds for every $y < f^0(x)$.
	\end{Lemma}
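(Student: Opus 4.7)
The plan is to take $\Lambda := \mathrm{Cont}(D^+\varphi)$, which satisfies $\lambda(\Lambda)=1$, fix $x\in\Lambda$ and $y\in(0,f^0(x))$, and show $K_{C_n}(x,[0,y])\to 0$; observe first that $K_C(x,[0,y])=0$ holds by \eqref{markov.non.strict}. Following the hint preceding the lemma and applying the standard subsubsequence principle in the compact interval $[0,\infty]\ni\varphi_n(0)$, it suffices to show that along any subsequence on which $\varphi_{n_j}(0)$ converges in $[0,\infty]$ one has $K_{C_{n_j}}(x,[0,y])\to 0$. By Lemma~\ref{direction}(i) any such limit is $\geq\varphi(0)$, leaving exactly the three cases (a), (b), (c) listed above.

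In case (a), $\varphi_{n_j}(0)\to\varphi(0)$, so Lemma~\ref{levelConvergence} (second assertion) gives $f^0_{n_j}(x)\to f^0(x)$; hence eventually $y<f^0_{n_j}(x)$ and \eqref{markov.non.strict} yields $K_{C_{n_j}}(x,[0,y])=0$ directly. In case (b), $\varphi_{n_j}(0)\to\infty$, and since pointwise convergence of copulas implies uniform convergence we have $C_{n_j}(x,y)\to C(x,y)=0$. The convexity inequality
\[
  \varphi_{n_j}(0)\ \geq\ \varphi_{n_j}(C_{n_j}(x,y)) - C_{n_j}(x,y)\,D^+\varphi_{n_j}(C_{n_j}(x,y))
\]
together with $\varphi_{n_j}(C_{n_j}(x,y))=\varphi_{n_j}(x)+\varphi_{n_j}(y)\to\varphi(x)+\varphi(y)<\infty$ (valid eventually because $\varphi_{n_j}(x)+\varphi_{n_j}(y)<\varphi_{n_j}(0)$ for large $j$) gives
\[
  -C_{n_j}(x,y)\,D^+\varphi_{n_j}(C_{n_j}(x,y))\ \geq\ \varphi_{n_j}(0)-\varphi_{n_j}(x)-\varphi_{n_j}(y)\ \to\ \infty.
\]
Since $C_{n_j}(x,y)\to 0$, this forces $-D^+\varphi_{n_j}(C_{n_j}(x,y))\to\infty$. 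Combined with $D^+\varphi_{n_j}(x)\to D^+\varphi(x)\in(-\infty,0)$ from Theorem~\ref{T1}(d), formula \eqref{markov.non.strict} yields $K_{C_{n_j}}(x,[0,y])\to 0$.

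Case (c), with $\varphi_{n_j}(0)\to\alpha\in(\varphi(0),\infty)$, splits according to the relation between $\varphi(x)+\varphi(y)$ and $\alpha$. If $\varphi(x)+\varphi(y)>\alpha$, then eventually $\varphi_{n_j}(x)+\varphi_{n_j}(y)>\varphi_{n_j}(0)$, so $C_{n_j}(x,y)=0$ and the kernel vanishes. If $\varphi(x)+\varphi(y)<\alpha$, the same convexity argument as in (b) applies with $\infty$ replaced by $\alpha-\varphi(x)-\varphi(y)>0$, which still forces $-D^+\varphi_{n_j}(C_{n_j}(x,y))\to\infty$ because $C_{n_j}(x,y)\to 0$. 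The boundary sub-case $\varphi(x)+\varphi(y)=\alpha$ is handled by monotonicity: any $y''\in(y,f^0(x))$ satisfies $\varphi(x)+\varphi(y'')<\varphi(x)+\varphi(y)=\alpha$, so by the previous sub-case $K_{C_{n_j}}(x,[0,y''])\to 0$, and monotonicity in the second argument gives $K_{C_{n_j}}(x,[0,y])\leq K_{C_{n_j}}(x,[0,y''])\to 0$.

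The principal obstacle is this borderline sub-case of (c) where $\varphi(x)+\varphi(y)$ coincides with the subsequential limit $\alpha$, for there the convexity inequality delivers no useful lower bound on $-D^+\varphi_{n_j}(C_{n_j}(x,y))$; the monotonicity shift of $y$ to a strictly larger $y''<f^0(x)$ sidesteps this by moving back into the regime where the convexity estimate is uniformly productive.
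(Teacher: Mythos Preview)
Your case (a) is fine and matches the paper. The problem is the convexity step in cases (b) and (c). From the tangent-line inequality
\[
\varphi_{n_j}(0)\ \geq\ \varphi_{n_j}(z_j)\;-\;z_j\,D^+\varphi_{n_j}(z_j),\qquad z_j:=C_{n_j}(x,y),
\]
the correct rearrangement is
\[
-\,z_j\,D^+\varphi_{n_j}(z_j)\ \le\ \varphi_{n_j}(0)-\varphi_{n_j}(z_j),
\]
not ``$\geq$'' as you write. (Check it on $\varphi(t)=(1-t)^2$, $z=\tfrac12$: $-zD^+\varphi(z)=\tfrac12$ while $\varphi(0)-\varphi(z)=\tfrac34$.) Consequently the convexity inequality gives only an \emph{upper} bound for $-z_jD^+\varphi_{n_j}(z_j)$, and you obtain no information forcing $-D^+\varphi_{n_j}(z_j)\to\infty$. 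The same reversal invalidates the sub-case $\varphi(x)+\varphi(y)<\alpha$ of (c), and hence also the borderline sub-case (which you reduce to the previous one). Using convexity the other way around, with a fixed point $w>z_j$, yields only $-D^+\varphi_{n_j}(z_j)\geq (\varphi_{n_j}(z_j)-\varphi_{n_j}(w))/(w-z_j)\to (S-\varphi(w))/w$, a finite positive number, which is not enough either.

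This is exactly why the paper does \emph{not} argue analytically via the kernel formula in cases (b) and (c). Instead it fixes $\varepsilon>0$ with $y\le f^0(x)-\varepsilon$, applies Portmanteau's theorem to the $\mu_C$-continuity set $M_\varepsilon=\{(a,b):b\le f^0(a)-\varepsilon\}$ to get $\mu_{C_{n_j}}(M_\varepsilon)\to 0$, and then derives a contradiction from the assumption $K_{C_{n_j}}(x,[0,y])\geq\delta>0$ along a further subsequence: convexity of the level curves $f^{t}_{n_j}$ and the monotonicity of $u\mapsto K_{C_{n_j}}(u,[0,f^{t}_{n_j}(u)])$ force a uniform positive lower bound for $\mu_{C_{n_j}}(M_\varepsilon)$, contradicting its convergence to $0$. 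To repair your argument you would need either a genuine lower bound on $-D^+\varphi_{n_j}(C_{n_j}(x,y))$ that survives the diagonal limit $C_{n_j}(x,y)\to 0$, or to switch to a measure-theoretic argument of this type.
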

	\begin{proof}
		As in the previous proof we set $\Lambda=\textrm{Cont}(D^+\varphi)$. 
		Fix $x\in$ Cont$(D^+\varphi)$ and $ y \in (0, f^0(x))$ and distinguish the following two different situations: \\
		(a) Suppose that $(\varphi_{n_j})_{j \in \mathbb{N}}$ is a subsequence of 
		$(\varphi_{n})_{n \in \mathbb{N}}$ fulfilling $\lim_{j \rightarrow \infty} \varphi_{n_j}(0) = \varphi(0)$. 
		According to Lemma \ref{levelConvergence} we have $\lim_{j \rightarrow \infty}f^0_{n_j}(x)=f^0(x)$, so there exists an index $j_0 \in \mathbb{N}$ such that $y < f^0_{n_j}(x)$, hence $K_{C_{n_j}}(x,[0,y])=0=K_C(x,[0,y])$, 
		holds for every $j \geq j_0$, from which $\lim_{j \rightarrow \infty} K_{C_{n_j}}(x,[0,y])=K_C(x,[0,y])$ follows immediately. \\	
		(b) \& (c) Suppose that $(\varphi_{n_j})_{j \in \mathbb{N}}$ is a subsequence of 
		$(\varphi_{n})_{n \in \mathbb{N}}$ fulfilling $\lim_{j \rightarrow \infty} \varphi_{n_j}(0) = \infty$ or $\lim_{j \rightarrow \infty} \varphi_{n_j}(0) = :\alpha \in (\varphi(0),\infty)$. 
		Choose $\varepsilon>0$ in such a way that $y\leq f^0(x)-\varepsilon$ holds and define the set $M_\varepsilon$ (see Figure \ref{Meps}) by 
		$$
		M_\varepsilon = \lbrace (a,b)\in[0,1]^2: b \leq f^0(a)-\varepsilon \rbrace.
		$$
		\begin{figure}[h!]
			\centering
			\includegraphics[width=0.75\textwidth]{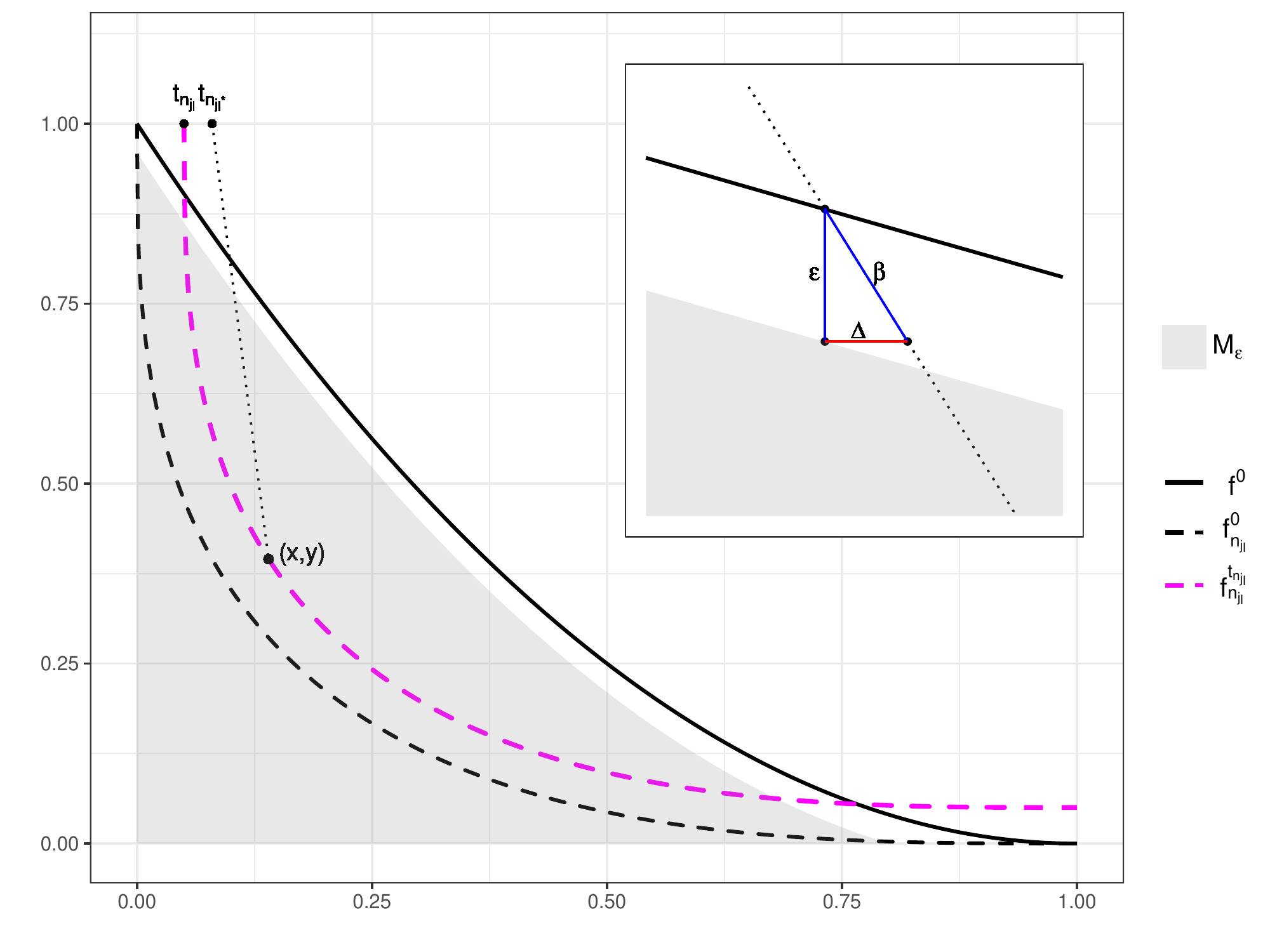}
			\caption{The $\mu_C$-null set $M_\varepsilon$, the level curves $f^{t_{n_{j_l}}}_{n_{j_l}}, f^0_{n_{j_l}}, f^0$ considered in the proof of Lemma \ref{main2} and a zoomed-in illustration of $\Delta$, where $\beta = \frac{1-y}{x-t_{n_{j_{l^*}}}}$.}\label{Meps}
		\end{figure}
		Then $\mu_C(M_\varepsilon) = 0$ and $M_\varepsilon$ is a $\mu_C$-continuity set, so applying Portmanteau's theorem (see \cite{billingsley}) yields 
		\begin{align}\label{proof:Me}
		\lim_{j \rightarrow \infty}\mu_{C_{n_j}}(M_\varepsilon) = 0.
		\end{align}
		Assume now that there exists some $\delta>0$ such that $K_{C_{n_j}}(x,[0,y]) \geq \delta>0$ would hold for infinitely many $j \in \mathbb{N}$ and denote the corresponding subsequence by $(C_{n_{j_l}})_{l \in \mathbb{N}}$. 
		It follows from eq. (\ref{markov.non.strict}) that $y \geq f^0_{n_{j_l}}(x)$ holds for every $l \in \mathbb{N}$. 
		Set $t_{n_{j_l}} := C_{n_{j_l}}(x,y)$ for every $l \in \mathbb{N}$ and let $l^*$ denote the smallest index fulfilling 
		that $t_{n_{j_l}} < x$ holds for all $l \geq l^*$.
		Using the fact that for every Archimedean copula $A$ with generator $\psi$ and for every $t \in [0,1)$ 
		the mapping	
		\begin{align*}
		x \mapsto	K_{A}(x,[0,f^t(x)]) = \frac{D^+\psi(x)}{D^+\psi(t)}
		\end{align*}
		is decreasing in $x$ it follows that for every $l \in \mathbb{N}$ we have 
		$$
		K_{C_{n_{j_l}}}(u, [0,f_{n_{j_l}}^{t_{n_{j_l}}}(u)]) \geq  K_{C_{n_{j_l}}}(x, [0,\underbrace{f_{n_{j_l}}^{t_{n_{j_l}}}(x)}_{=y}])  \geq  \delta >0
		$$ 
		for every $u \in (0,x]$. The proof idea now is to use this monotonicity in combination with convexity of the level curves (see \cite{Nelsen}) to construct a contradiction to eq. (\ref{proof:Me}): In fact, convexity implies that 
		the graph of each $f_{n_{j_l}}^{t_{n_{j_l}}}$ restricted to $[t_{n_{j_{l^*}}},x]$ lies below the straight line connecting the points $(x,y)$ and $(t_{n_{j_{l^*}}},1)$ (again see Figure \ref{Meps}). Hence, defining $\Delta>0$ by
		\begin{align}
		\Delta = \frac{\varepsilon}{\frac{1-y}{x-t_{n_{j_{l^*}}}}} = \varepsilon \, \frac{x-t_{n_{j_{l^*}}}}{1-y\,\,\,}
		\end{align}
		it follows that for every $u \in [x-\Delta,x]$ we have $(u,f_{n_{j_l}}^{t_{n_{j_l}}}(u)) \in M_\varepsilon$. 
		As direct consequence it follows that
		\begin{align*}
		\mu_{C_{n_{j_{l}}}}(M_\varepsilon) \geq \int_{[x-\Delta, x]} 
		K_{C_{n_{j_{l}}}}(u,[0,f_{n_{j_{l}}}^{t_{n_{j_{l}}}}(u)]) \ \mathrm{d}\lambda(u) \geq \delta \cdot \Delta > 0
		\end{align*}
		holds for every $l > l^*$ which contradicts eq. (\ref{proof:Me}). 
		Altogether in case (b) \& (c) we have also shown now that  $\lim_{j \rightarrow \infty} K_{C_{n_j}}(x,[0,y]) = 0 = K_C(x,[0,y])$ holds. \\
		Taking (a) and (b) \& (c) together we have proved that for each convergent subsequence $(\varphi_{n_j}(0))_{j \in \mathbb{N}}$
		of $(\varphi_n(0))_{n \in \mathbb{N}}$ we have 
		$$
		\lim_{j \rightarrow \infty}K_{C_{n_j}}(x,[0,y]) = 0. 
		$$
		The result now follows from Lemma \ref{subsequences}. 
	\end{proof}
	
	Considering that weak conditional convergence of the copulas implies convergence in $D_1$ the proof of this section's main result Theorem \ref{result} is complete. In Section \ref{est} we will use the following interesting consequence: 	
	\begin{Cor}\label{cor:main1}
		Let $C, C_1,C_2,\ldots$ be Archimedean copulas with gene\-rators $\varphi, \varphi_1,\varphi_2,\ldots$, respectively and suppose that $(\varphi_n)_{n \in \mathbb{N}}$ converges to $\varphi$ on $(0,1]$. Then the following identities holds:
		\begin{equation}
		\lim_{n \rightarrow \infty} \zeta_1(C_n)= \zeta_1(C),\quad \lim_{n \rightarrow \infty} r(C_n)= r(C)
		\end{equation}
		In other words: Within $\mathcal{C}_{ar}$ both $\zeta_1$ and $r$ are continuous w.r.t. pointwise convergence of the generators on $(0,1]$.
	\end{Cor}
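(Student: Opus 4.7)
The statement is a direct consequence of Theorem \ref{result}, so the plan is really just to assemble the appropriate pieces. Assume $(\varphi_n)_{n \in \mathbb{N}}$ converges pointwise to $\varphi$ on $(0,1]$. Then condition (c) of Theorem \ref{result} holds, so in particular we obtain both (e) $D_1(C_n,C) \to 0$ and (f) $C_n \xrightarrow{wcc} C$. These will be the only inputs I need.

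For $\zeta_1$ the argument is immediate: since $D_1$ is a genuine metric on $\mathcal{C}$, the reverse triangle inequality yields
\begin{align*}
\left| \zeta_1(C_n) - \zeta_1(C) \right|
 = 3 \left| D_1(C_n,\Pi) - D_1(C,\Pi) \right|
 \leq 3\, D_1(C_n,C) \xrightarrow{n \to \infty} 0,
\end{align*}
so continuity of $\zeta_1$ at $C$ follows at once.

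For $r$ I would proceed via weak conditional convergence. Since $r(C)=6\,D_2^2(C,\Pi)$, what I really need is
\begin{align*}
\int_{[0,1]}\int_{[0,1]} K_{C_n}(x,[0,y])^2 \, \mathrm{d}\lambda(x)\,\mathrm{d}\lambda(y)
\;\longrightarrow\;
\int_{[0,1]}\int_{[0,1]} K_{C}(x,[0,y])^2 \, \mathrm{d}\lambda(x)\,\mathrm{d}\lambda(y).
\end{align*}
By (f) there is a $\lambda$-null set $N \subseteq [0,1]$ off which $K_{C_n}(x,\cdot) \Rightarrow K_C(x,\cdot)$ weakly. For each such $x$ the distribution function $y \mapsto K_C(x,[0,y])$ has at most countably many discontinuities, so $K_{C_n}(x,[0,y]) \to K_C(x,[0,y])$ for $\lambda$-a.e.\ $y$; Fubini then gives pointwise convergence of the integrands $\lambda_2$-almost everywhere on $[0,1]^2$. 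Since all integrands are bounded by $1$, Dominated Convergence produces the claimed limit, hence $r(C_n) \to r(C)$. Alternatively one can avoid invoking weak conditional convergence entirely and exploit the fact that $D_1$ and $D_2$ generate the same topology on $\mathcal{C}$, so $D_1(C_n,C) \to 0$ implies $D_2(C_n,C) \to 0$, and then the reverse triangle inequality together with continuity of squaring on the bounded interval $[0,\tfrac{1}{\sqrt{3}}]$ finishes the job.

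Since every step is either a triangle inequality or a bounded convergence argument, there is no real obstacle here; the substantive work has already been carried out in Theorem \ref{result}, and the corollary is essentially a packaging of (c) $\Leftrightarrow$ (e) $\Leftrightarrow$ (f) into the language of the two dependence measures.
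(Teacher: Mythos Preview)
Your proposal is correct and mirrors the paper's approach: the corollary is stated without proof as an immediate consequence of Theorem~\ref{result}, and your argument simply spells out the obvious route via $D_1$-convergence (for $\zeta_1$) and either weak conditional convergence plus dominated convergence or the $D_1$/$D_2$ topological equivalence (for $r$). There is nothing to add.
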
 
	
%

	We conclude this section by recalling the fact that the class of Archimedean copulas is not closed 
	w.r.t. uniform convergence, i.e., the limit of a sequence of Archimedean copulas may fail to be Archimedean, see \cite{convArch} (however, we necessarily have associativity). 
	As easily verified, the same is true if we consider weak conditional convergence or convergence w.r.t. $D_1$.
	
	
	\section{Extreme Value copulas}
	We are now going to prove a result similar to Theorem \ref{result} for bivariate Extreme Value copulas. 
	Remember that $C\in\mathcal{C}$ is called bivariate \emph{Extreme Value copula} if one of the following three equivalent conditions is fulfilled (see \cite{deHaan, Principles, GS2, Pickands}):
	\begin{enumerate}
		\item[(a)] There is a copula $B\in\mathcal{C}$ such that for all $x,y\in [0,1]$ we have
		\begin{align}
		C(x,y) = \lim_{n \rightarrow \infty} B^n(x^{\frac{1}{n}}, y^\frac{1}{n}).
		\end{align}
		\item[(b)] $C(x,y) = C^n(x^\frac{1}{n}, y^\frac{1}{n})$ holds for all $n\in\mathbb{N}$ and all $x,y\in[0,1]$.
		\item[(c)] There exists a convex map $A:[0,1]\to[0,1]$ satisfying $A(0) = A(1) = 1$ and $\max(1-x,x)\leq A(x) \leq 1$ for all $x\in [0,1]$ such that for all $x,y\in(0,1)$ the copula $C$ can be expressed in terms of $A$ as
		\begin{align}
		C(x,y) = C_A(x,y):=(xy)^{A\big(\frac{\ln(x)}{\ln(xy)}\big)}.
		\end{align}
	\end{enumerate}
In the following we will let $\mathcal{C}_{ev}$ denote the class of all bivariate Extreme Value copulas, $\mathcal{A}$ the family of all \emph{Pickands dependence functions}, i.e., the family of all functions $A$ fulfilling assertion (c). 
	Using either max-stability or Arzela-Ascoli theorem \cite{rudinrc} it is straightforward to verify that 
	$\mathcal{C}_{ev}$ is a compact subset of $(\mathcal{C},d_\infty)$. Furthermore, letting $\Vert \cdot \Vert_\infty$ denote the uniform norm on $\mathcal{A}$, obviously the mapping 
	$\Phi: (\mathcal{A},\Vert \cdot \Vert_\infty) \rightarrow (\mathcal{C}_{ev},d_\infty)$, defined by
	$\Phi(A)=C_A$, is continuous and it is straightforward to verify that a sequence of Extreme Value copulas $(C_{A_n})_{n \in \mathbb{N}}$ converges pointwise (hence uniformly) to an Extreme Value copula $C_A$ if, and only if 
	$(A_n)_{n\in\mathbb{N}}$ converges uniformly to $A$. 
	
	Following \cite{p23} we will let $D^+A$ denote the right-hand derivative of the Pickands dependence function $A$ on $[0,1)$ 
	and $D^-A$ the left-hand derivative on $(0,1]$. Furthermore, convexity implies that $D^-A(x) = D^+A(x)$ holds for all but at most countably infinitely many $x \in (0,1)$. In the sequel we will let $\text{Cont}(D^+A)$ denote 
	the set of all continuity points of $D^+A$ in $(0,1)$. 
	Setting $D^+A(1): = D^-A(1)$ we can view $D^+A$ as a function on the whole unit interval that attains values in $[-1,1]$. 
	Furthermore $D^+A: [0,1]\to [-1,1]$ is a non-decreasing, right-continuous function and it is straightforward to verify that 
	$\mathcal{A}$ can be identified with $\mathcal{D}_\mathcal{A}$, defined by
	\begin{equation*}
	\mathcal{D}_\mathcal{A}=\bigg\{f: [0,1] \rightarrow [-1,1]: f \textrm{ non-decreasing,  
		right-continuous}, \int_{[0,1]} f \ \mathrm{d}\lambda =0 \bigg\},
	\end{equation*}
	in the sense that for every $A \in \mathcal{A}$ we have $D^+A \in \mathcal{D}_\mathcal{A}$ and, given 
	$f \in \mathcal{D}_\mathcal{A}$ setting $A(x):= 1+ \int_{[0,x]} f \mathrm{d}\lambda$ yields $A \in \mathcal{A}$ as well as $D^+A=f$ on $[0,1)$ (see \cite{p23}). For more information on Pickands dependence functions and the approach via right-hand derivatives we refer to \cite{Caperaa, Ghoudi}. 
	
	Returning to weak conditional convergence first notice that according to \cite{p23} 
	\begin{align} \label{kernel.evc}
	K_{C}(x,[0,y]) = \begin{cases}
	1 & \text{ if } x\in \lbrace 0, 1\rbrace \\
	C(x,y)\big[ D^+A\big(\frac{\log(x)}{\log(xy)}\big) \frac{\log(y)}{x \log(xy)} + \frac{1}{x} A\big(\frac{\log(x)}{\log(xy)}\big) \big] & \text{ if } x,y\in (0,1) \\
	y & \text{ if } x\in (0,1), y\in \lbrace 0, 1 \rbrace
	\end{cases}
	\end{align}
	is a Markov kernel of the Extreme Value copula $C$ with Pickands dependence function $A$. 
	
	We now state the main result of this section saying that in $\mathcal{C}_{ev}$ 
	pointwise convergence and weak conditional convergence are equivalent:   
	
	\begin{Th}\label{EVcorollary}
		Let $C,C_1,C_2,\ldots$ be Extreme Value copulas with Pickands dependence functions $A,A_1, A_2,\ldots$, respectively. Then the following assertions are equivalent:
		\begin{enumerate}
			\item[(a)] $\lim\limits_{n\rightarrow\infty} C_n(x,y) = C(x,y)$ for all $x,y\in[0,1]$,
			\item[(b)] $\lim\limits_{n\rightarrow\infty}A_n(x) = A(x)$ for all $x\in[0,1]$,
			\item[(c)] $\lim\limits_{n\rightarrow\infty}D^+A_n(x) = D^+A(x)$ for all $x\in$ \emph{Cont}$(D^+A)$,
			\item[(d)]  $\lim\limits_{n\rightarrow\infty}D_1(C_n,C) = 0$,
			\item[(e)] $C_n \xrightarrow{wcc} C$ for $n \rightarrow \infty$.
		\end{enumerate}
	\end{Th}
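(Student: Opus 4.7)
The plan is to prove the chain of implications (a)$\Leftrightarrow$(b), (b)$\Leftrightarrow$(c), (b)$\Rightarrow$(e), (e)$\Rightarrow$(d), (d)$\Rightarrow$(a), thereby closing the loop. The equivalence (a)$\Leftrightarrow$(b) was already observed right before the theorem statement (a sequence of EV copulas converges pointwise iff the associated Pickands dependence functions converge uniformly, which follows from continuity of $\Phi$ and from the fact that $\mathcal{A}$ is equicontinuous so pointwise convergence upgrades to uniform). The implication (e)$\Rightarrow$(d) is Lemma 7 of \cite{p06}, and (d)$\Rightarrow$(a) follows because $D_1$ generates a stronger topology than $d_\infty$. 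Thus only (b)$\Leftrightarrow$(c) and (b)$\Rightarrow$(e) remain.

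For (b)$\Leftrightarrow$(c) I would copy the argument from Theorem \ref{T1}. For (b)$\Rightarrow$(c) convexity of the $A_n$ together with pointwise convergence gives continuous convergence of $D^+A_n$ to $D^+A$ on $\textrm{Cont}(D^+A)$ (Rockafellar, as already invoked in this paper). For (c)$\Rightarrow$(b) I would note that $|D^+A_n(t)|\leq 1$ for all $n$ and all $t$, so that dominated convergence applied to $A_n(x)=1+\int_{[0,x]} D^+A_n\,\mathrm{d}\lambda$ (together with the fact that $[0,1]\setminus\textrm{Cont}(D^+A)$ is $\lambda$-null) yields pointwise convergence $A_n(x)\to A(x)$ for every $x\in[0,1]$, the boundary values being forced by $A_n(0)=A_n(1)=1$.

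The main step is (b)$\Rightarrow$(e), and this is where I would use the explicit kernel formula (\ref{kernel.evc}). Fix $x\in(0,1)$ and introduce the homeomorphism $\tau_x\colon(0,1)\to(0,1)$ defined by $\tau_x(y):=\frac{\log x}{\log(xy)}=\frac{\log x}{\log x+\log y}$. Then the pre-image $U_x:=\tau_x^{-1}(\textrm{Cont}(D^+A))$ is cocountable, hence dense, in $(0,1)$. For $y\in U_x$ all three ingredients appearing in (\ref{kernel.evc}) converge: $C_n(x,y)\to C(x,y)$ by (a), $A_n(\tau_x(y))\to A(\tau_x(y))$ by (b), and $D^+A_n(\tau_x(y))\to D^+A(\tau_x(y))$ by (c). Plugging this into (\ref{kernel.evc}) and noting that the factor $\frac{\log y}{x\log(xy)}+\frac{1}{x}$-style coefficients depend neither on $n$ nor on $A$, I obtain
\begin{equation*}
\lim_{n\to\infty} K_{C_n}(x,[0,y]) \;=\; K_C(x,[0,y]) \qquad \text{for every } y\in U_x.
\end{equation*}
Since the distribution function $y\mapsto K_C(x,[0,y])$ has at most countably many discontinuities on $(0,1)$, pointwise convergence on the dense set $U_x$ is enough for weak convergence of $K_{C_n}(x,\cdot)$ to $K_C(x,\cdot)$ (by the standard Portmanteau-type characterisation, see \cite{billingsley}). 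As this holds for every $x\in(0,1)$, in particular for $\lambda$-a.e.~$x$, weak conditional convergence follows.

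The main obstacle is purely bookkeeping at the boundary: one has to make sure that the discontinuity set of $D^+A$ (which is harmless in the Archimedean setting because $D^+\varphi$ is pulled back by a fixed variable $x$) is transported back to a small set in $y$ for each $x$. The use of the homeomorphism $\tau_x$ handles this cleanly, so the proof does not require the delicate level-curve analysis needed in Section \ref{section:archimedean}. No analogue of Lemma \ref{main2} is necessary since Extreme Value copulas are absolutely continuous on $(0,1)^2$ and no zero-level curve is present.
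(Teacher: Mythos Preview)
Your proof is correct and follows essentially the same route as the paper: the key step (b)$\Rightarrow$(e) is precisely Lemma~\ref{extremevalue}, with your $\tau_x$ being the paper's $h_x$, and the remaining implications (a)$\Leftrightarrow$(b), (e)$\Rightarrow$(d)$\Rightarrow$(a) are handled exactly as the paper does; you are in fact a bit more explicit than the paper in spelling out (c)$\Rightarrow$(b) via dominated convergence. One minor inaccuracy in your closing remark: Extreme Value copulas need \emph{not} be absolutely continuous on $(0,1)^2$ (e.g.\ $M$, corresponding to $A(x)=\max(x,1-x)$, is singular), but this does not affect your argument since the kernel formula~(\ref{kernel.evc}) covers all cases uniformly and no splitting below a zero-level curve is required.
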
 
	
	\begin{Rem}
	Since for every Extreme Value copula $C_A$ its transpose $C_{A^t} := (C_A)^t$ is an Extreme Value copula with 
	Pickands dependence function $A^t$ given by $A^t (x) = A(1-x)$
	for every $x\in[0,1]$ it follows that in the class of bivariate Extreme Value copulas the properties	
	$C_n \xrightarrow{wcc} C$ and $C_n^t \xrightarrow{wcc} C^t$ are equivalent. 
	As a consequence of Theorem \ref{EVcorollary} 
	(and in contrast to Remark \ref{delta.convergence.rem1}) 
	weak conditional convergence and $\partial$-convergence are therefore equivalent in $\mathcal{C}_{ev}$.
	\end{Rem}

	Theorem \ref{EVcorollary} is a direct consequence of the following analogue of Lemma \ref{main} and Lemma \ref{main2}
	(notice that the result implies weak conditional convergence for ANY choice of the Markov kernels):
	
	\begin{Lemma}\label{extremevalue}
		Let $C,C_1,C_2,\ldots$ be bivariate Extreme Value copulas with Pickands dependence functions $A,A_1, A_2,\ldots$, respectively. 
		Suppose that $(C_n)_{n\in\mathbb{N}}$ converges pointwise to $C$ and choose the corresponding kernels 
		according to (\ref{kernel.evc}). Then for every $x\in(0,1)$
		there exists a set $U_x \subset [0,1]$ that is dense in $[0,1]$ such that
		\begin{align}\label{temp.evc.conv}
		\lim_{n \rightarrow \infty}K_{C_n}(x,[0,y]) = K_{C}(x,[0,y])
		\end{align}
		holds for every $y \in U_x$.
	\end{Lemma}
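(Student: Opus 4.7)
The plan is to pass directly to the limit inside the explicit kernel formula \eqref{kernel.evc}. Fix $x\in(0,1)$ and define the map $\tau\colon[0,1]\to[0,1]$ by $\tau(0)=0$, $\tau(1)=1$, and $\tau(y)=\frac{\log x}{\log(xy)}$ for $y\in(0,1)$. Since $x$ is fixed in $(0,1)$, a quick calculation shows that $\tau$ is strictly increasing and continuous on $(0,1)$ with $\tau(0{+})=0$ and $\tau(1{-})=1$, so $\tau$ is a homeomorphism of $[0,1]$ onto itself. Defining
$$
U_x \;:=\; \{0,1\}\cup \tau^{-1}\bigl(\text{Cont}(D^+A)\bigr),
$$
one obtains a dense subset of $[0,1]$, because $[0,1]\setminus\text{Cont}(D^+A)$ is at most countable and $\tau$ is a homeomorphism.

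Next I would invoke the characterization stated immediately before the theorem: pointwise convergence of $(C_n)_{n\in\mathbb{N}}$ to $C$ is equivalent to uniform convergence of $(A_n)_{n\in\mathbb{N}}$ to $A$. Since the $A_n$ are convex, the classical Rockafellar-type result on convergence of one-sided derivatives of convex functions (already used in the Archimedean setting, see the proof of Theorem \ref{T1}) yields $\lim_{n\to\infty}D^+A_n(t)=D^+A(t)$ at every $t\in\text{Cont}(D^+A)$.

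Now fix $y\in U_x$. The boundary cases $y\in\{0,1\}$ are immediate from \eqref{kernel.evc}, since both $K_{C_n}(x,[0,y])$ and $K_C(x,[0,y])$ reduce to $y$. For $y\in(0,1)\cap U_x$, i.e. $\tau(y)\in\text{Cont}(D^+A)$, one simultaneously has $C_n(x,y)\to C(x,y)$ by hypothesis, $A_n(\tau(y))\to A(\tau(y))$ by uniform convergence, and $D^+A_n(\tau(y))\to D^+A(\tau(y))$ by the preceding remark. Since the factors $\frac{\log y}{x\log(xy)}$ and $\frac{1}{x}$ appearing in \eqref{kernel.evc} are finite constants independent of $n$, formula \eqref{kernel.evc} depends continuously on these three convergent quantities, and $\lim_{n\to\infty}K_{C_n}(x,[0,y])=K_C(x,[0,y])$ follows.

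The only nontrivial ingredient is the convergence $D^+A_n(t)\to D^+A(t)$ at continuity points of $D^+A$, which is the same convex-analytic fact that drives the Archimedean argument; everything else is a change of variable and substitution into \eqref{kernel.evc}. Thus I do not anticipate any real obstacle beyond verifying that the bijection $\tau$ maps the dense set $\text{Cont}(D^+A)$ back to a dense set in $[0,1]$, which follows from the homeomorphism property.
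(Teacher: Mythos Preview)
Your proposal is correct and follows essentially the same route as the paper: both arguments reduce the problem to the convex-analytic fact that $D^+A_n\to D^+A$ at continuity points of $D^+A$, then pull back $\text{Cont}(D^+A)$ through the homeomorphism $y\mapsto\frac{\log x}{\log(xy)}$ to obtain the dense set $U_x$, and finish by direct substitution into the explicit kernel formula~\eqref{kernel.evc}. Your write-up is in fact a bit more explicit than the paper's about the final substitution step and the boundary cases $y\in\{0,1\}$, but the underlying idea is identical.
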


	\begin{proof}
		If $(C_n)_{n\in\mathbb{N}}$ converges pointwise to $C$ then, as mentioned before, it follows that   
		$\lim_{n \rightarrow \infty} \Vert  A_n- A \Vert_\infty=0$ holds. 
		Thus (as in the case of Archimedean generators) convexity yields
		$$
		\lim_{n \rightarrow \infty} D^+A_n(x) = D^+A(x)
		$$ 
		for every $x\in\text{Cont}(D^+A)$. Defining $h_x:(0,1) \rightarrow (0,1)$ for every $x\in(0,1)$ by
		\begin{align}
		h_x(y)=\frac{\log(x)}{\log(xy)}
		\end{align}
		yields a strictly increasing homeomorphism of $(0,1)$. Since $(0,1) \setminus \text{Cont}(D^+A)$ is at most countably infinite $h_x^{-1}\left((0,1) \setminus \text{Cont}(D^+A)\right)$ is as well and  
		$$
		\lambda\left(h_x^{-1}(\text{Cont}(D^+A))\right) =1
		$$ 
		follows. Being a set of full measure $U_x:=h_x^{-1}(\text{Cont}(D^+A))$ is dense in $[0,1]$ and the result follows.
	\end{proof}
	
	Altogether we have proved Theorem \ref{EVcorollary} which, in turn, has the following corollary:
	\begin{Cor}\label{cor:main2}
		Let $C, C_1,C_2,\ldots$ be Extreme Value copulas with Pickands dependence functions $A, A_1,A_2,\ldots$, respectively and suppose that $(A_n)_{n \in \mathbb{N}}$ converges to $A$ on $[0,1]$. Then the following identities holds:
		\begin{equation}
		\lim_{n \rightarrow \infty} \zeta_1(C_n)= \zeta_1(C),\quad \lim_{n \rightarrow \infty} r(C_n)= r(C)
		\end{equation}
		In other words: Within $\mathcal{C}_{ev}$ both $\zeta_1$ and $r$ are continuous w.r.t. pointwise convergence of the Pickands dependence functions.
	\end{Cor}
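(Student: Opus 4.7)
The plan is to derive both claims as direct consequences of Theorem~\ref{EVcorollary}. Pointwise convergence of $(A_n)_{n\in\mathbb{N}}$ to $A$ on $[0,1]$ is precisely condition (b) of that theorem; it is therefore equivalent to (d) $D_1(C_n,C)\to 0$ and to (e) $C_n\xrightarrow{wcc}C$. These two conclusions will handle the $\zeta_1$- and $r$-statements respectively.

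For $\zeta_1$, since by definition $\zeta_1(C)=3\,D_1(C,\Pi)$, I would simply invoke the triangle inequality for the metric $D_1$:
\begin{align*}
|\zeta_1(C_n)-\zeta_1(C)| \;=\; 3\,|D_1(C_n,\Pi)-D_1(C,\Pi)| \;\le\; 3\,D_1(C_n,C) \xrightarrow{n\to\infty} 0.
\end{align*}

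For $r$, I would use the representation $r(C)=6\,D_2^2(C,\Pi)$ noted in Section~2 and reduce to showing $D_2(C_n,C)\to 0$; the triangle inequality for $D_2$ then closes the argument exactly as in the $\zeta_1$ case. The convergence $D_2(C_n,C)\to 0$ follows from weak conditional convergence via bounded convergence: for $\lambda$-almost every $x\in[0,1]$ the weak convergence $K_{C_n}(x,\cdot)\to K_C(x,\cdot)$ forces pointwise convergence of the monotone conditional distribution functions $y\mapsto K_{C_n}(x,[0,y])$ to $y\mapsto K_C(x,[0,y])$ at every continuity point of the limit, i.e.\ outside an at most countable set. Hence the integrand $(K_{C_n}(x,[0,y])-K_C(x,[0,y]))^2$ in $D_2^2(C_n,C)$ tends to $0$ for $\lambda\otimes\lambda$-almost every $(x,y)\in[0,1]^2$; being uniformly bounded by $1$, dominated convergence yields $D_2^2(C_n,C)\to 0$, as required.

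The main point — rather than a genuine obstacle — is to notice that Theorem~\ref{EVcorollary} explicitly records $D_1$-convergence but not $D_2$-convergence, so one has to supply the short bounded-convergence argument above to reach $D_2(C_n,C)\to 0$. No extra Extreme Value structure is needed beyond what Theorem~\ref{EVcorollary} already supplies, and the proof of the Archimedean analogue (Corollary~\ref{cor:main1}) proceeds along exactly the same lines.
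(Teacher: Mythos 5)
Your proposal is correct and follows the same route as the paper, which states Corollary \ref{cor:main2} as a direct consequence of Theorem \ref{EVcorollary} (condition (b) being exactly pointwise convergence of the Pickands dependence functions, yielding $D_1$-convergence and weak conditional convergence), with the $\zeta_1$-claim handled by the triangle inequality for $D_1$. Your bounded-convergence argument for $D_2(C_n,C)\to 0$ is a valid way to reach the $r$-claim, though it can be shortcut by the fact recorded in Section~2 that $D_1$ and $D_2$ generate the same topology on $\mathcal{C}$, so $D_1$-convergence already gives $D_2$-convergence and then the triangle inequality for $D_2$ finishes as in the $\zeta_1$ case.
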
 
	
	We conclude this section with the following remark:
	\begin{Rem}
		Suppose that $H, H_1, H_2, \ldots$ are the continuous bivariate distribution functions of the pairs $(X,Y), (X_1,Y_1), (X_2,Y_2), \ldots$, with corresponding marginal distribution functions $F_X, F_{X_1}, F_{X_2},\ldots$ and $G_Y, G_{Y_1}, G_{Y_2}, \ldots$ and corresponding copulas $C, C_1, C_2,\ldots$. 
		Defining  $H_n \xrightarrow{\text{wcc}} H$ analogously to Definition \ref{def:wcc} (notice that in this case $\lambda$ is replaced by $\mathbb{P}^X$) it is straightforward to verify that 
		$H_n \xrightarrow{\text{wcc}} H$ implies $C_n \xrightarrow{\text{wcc}} C$ but not necessarily vice versa. 
		In the case that all $C,C_1,C_2,\ldots$ are Extreme Value copulas and the Pickands function $A$ of the limit copula 
		$C$ is twice differentiable, however, the reverse implication also holds.
		In the class $\mathcal{C}_{ar}$ the two concepts are equivalent too if, for instance, all generators are $3$-monotone.
	\end{Rem}

	\section{Consequences for the estimation of Archimedean and Extreme Value copulas}\label{est}
	
	\subsection{Extreme Value copulas}
	Suppose that $C_A$ is an Extreme Value copula with Pickands function $A$ and suppose that $(X_1,Y_1),(X_2,Y_2),\ldots$ is a random sample from $(X,Y) \sim C_A$. 
	Letting $\hat{A}_n$ denote the CFG estimator according to \cite{Caperaa, InferenceEVs} 
	(for an estimator in the multivariate setting see \cite{GS}) it can be shown that if $A$ is twice continuously differentiable then the corresponding process $\sqrt{n}(\hat{A}_n-A)$ (in the space of $C([0,1],\Vert \cdot \Vert_\infty)$ of all continuous functions on the unit interval) has a weak limit,
	and that, for suitable weight functions, $(\hat{A}_n)_{n \in \mathbb{N}}$ is uniformly, strongly consistent (see \cite[Proposition 4.1]{Caperaa}).	
  Although the estimator $\hat{A}_n$ may fail to be convex in general, following an idea from \cite{InferenceEVs} 
	it can be used to construct a convex estimator $\hat{A}^\ast_n$ given by 
	$$ 
	  \hat{A}^\ast_n
		:= \textrm{greatest convex minorant of } 
		   \max \{ \min\{\hat{A}_n, 1\}, \textrm{id}, 1- \textrm{id} \}
	$$
	where $\textrm{id}$ denotes the identity function on $[0,1]$.
	$\hat{A}^\ast_n$ is a Pickands dependence function (see \cite[Section 3.3]{InferenceEVs}) and the estimator $\hat{A}^\ast_n$ 
	is uniformly, strongly consistent (the latter follows from \cite{Marshall}).
     Hence Theorem \ref{EVcorollary} directly yields weak conditional convergence of the sequence of 
     corresponding Extreme Value copulas $(C_{\hat{A}^\ast_n})_{n \in \mathbb{N}}$ to $C_A$ 
	Moreover, according to Corollary \ref{cor:main2}
	$$
	\lim_{n \rightarrow \infty} \vert \zeta_1(C_{\hat{A}^\ast_n}) - \zeta_1(C_A) \vert = 0
	$$
	holds and the same is true for the dependence measure $r$ studied in \cite{Dette}, i.e., for estimating $\zeta_1(C_A)$ it suffices to have a good estimator of the Pickands dependence function $A$ 
	(we refer to \cite{Griessenberger} for more details concerning the estimation of $\zeta_1(X,Y)$).
	
	\begin{Ex}\label{ex:sim.ev} 
		Consider the Galambos copula $C_A$ with parameter 
		$\theta = 3$, i.e., the Extreme Value copula whose Pickands dependence function is given by 
		$A(x) = 1 - (x^{-3} + (1-x)^{-3})^{-1/3}$. 
		Figure \ref{sample_galambos10000} depicts a sample (and corresponding histograms) 
		for the case $n=10000$. 
		\begin{figure}[!htp]
			\centering
			\includegraphics[width=0.8\textwidth]{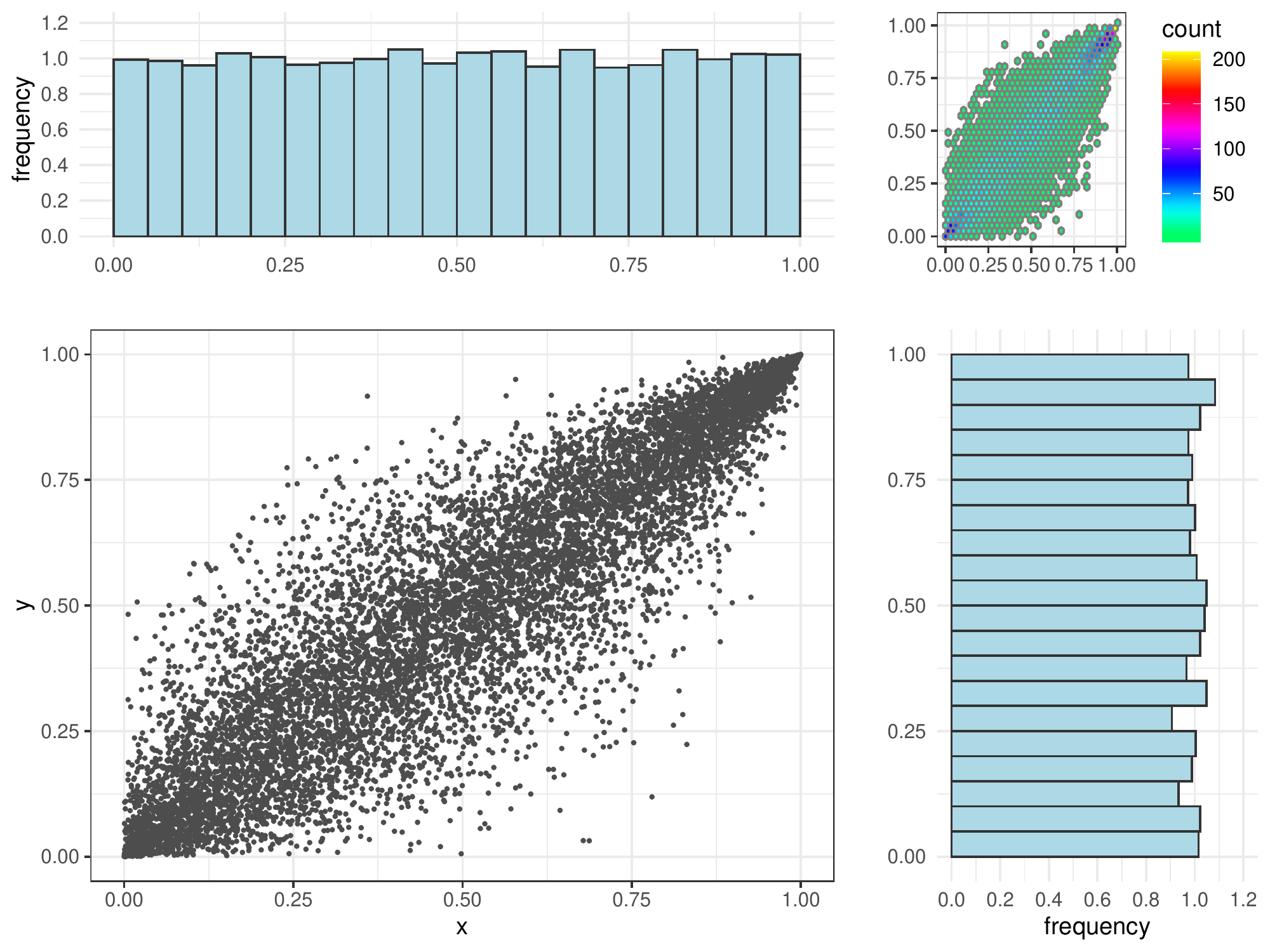}
			\caption{Sample of size $n=10000$ from the Galambos copula with parameter $\theta=3$ as considered in Example \ref{ex:sim.ev} (lower left panel); two-dimensional histogram (upper right panel) and marginal histograms (upper left and lower right panel).}
			\label{sample_galambos10000}
		\end{figure}
		\newline Using the R-package \textquoteleft copula' we calculate the estimator $\hat A_n^\ast$ of $A$ as 
		described above. 
		Figure \ref{estimatedPickands} depicts the obtained generators together with the true Pickands function $A$.
		For the dependence measure $\zeta_1$ (again using the R-package \textquoteleft qad') we obtained the following values: 
		$\zeta_1(C_{\hat A_{500}^\ast}) = 0.7746, \zeta_1(C_{\hat A_{10000}^\ast}) = 0.7594, \zeta_1(C_A) = 0.7513$.
	\end{Ex}
	\begin{figure}[!ht]
			\centering
			\includegraphics[width=0.72\textwidth]{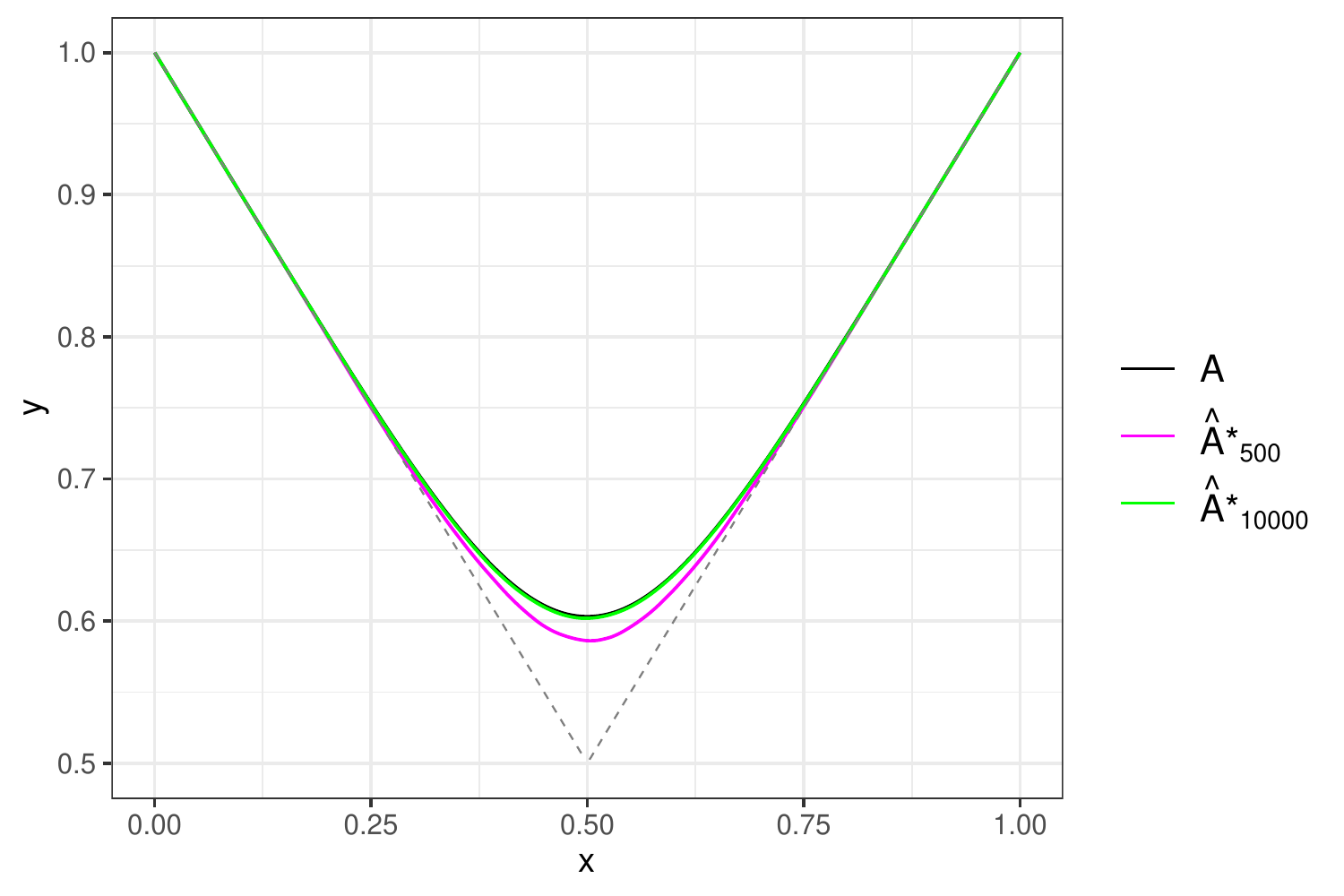}
			\caption{The Pickands dependence function $A$ (black) as well as the estimate $\hat{A}_n^\ast$ for $n=500$ and $n=10000$ as considered in Example \ref{ex:sim.ev}.}
			\label{estimatedPickands}
		\end{figure}
	
	We now focus on the estimation of $\zeta_2 := r$ introduced in \cite{Dette}, denote the estimator of $r$ developed 
	by Chatterjee in \cite{Chatterjee} by $\hat{r}_n$ (for an implementation see the R-package \textquoteleft XICOR' \cite{xicorPackage}) and proceed with a small simulation study comparing the afore-mentioned plugin approach 
	(using the Extreme Value information) 
	with $\hat{r}_n$ (not taking into account the Extreme Value information).	In other words: 
	Given a random sample $(X_1,Y_1)$, $(X_2,Y_2), \ldots$ from $(X,Y)\sim C_A \in \mathcal{C}_{ev}$ we calculate 
	$\hat{r}_n$ and $r(C_{\hat{A}_n^\ast}) = 6\cdot D_2^2(C_{\hat{A}_n^\ast},\Pi)$ for different sample sizes a total of 
	$R=5000$ times. Doing so we consider two cases of $C_A$: the Galambos copula with parameter $\theta = 3$ and
	the Extreme Value copula with piecewise linear Pickands dependence function $A$ given by 
		\begin{align}\label{eq:ex.sim.evc}
		A(x) 
		= \mathbf{1}_{[0,\frac{1}{4}]} (x) \cdot (1-x) + \mathbf{1}_{(\frac{1}{4}, \frac{7}{10}]} \left(-\frac{1}{9} (x-7) \right) + \mathbf{1}_{(\frac{7}{10},1]} (x) \cdot x.
		\end{align}
	\begin{figure}[!ht]
		\centering
		\includegraphics[width=0.95\textwidth]{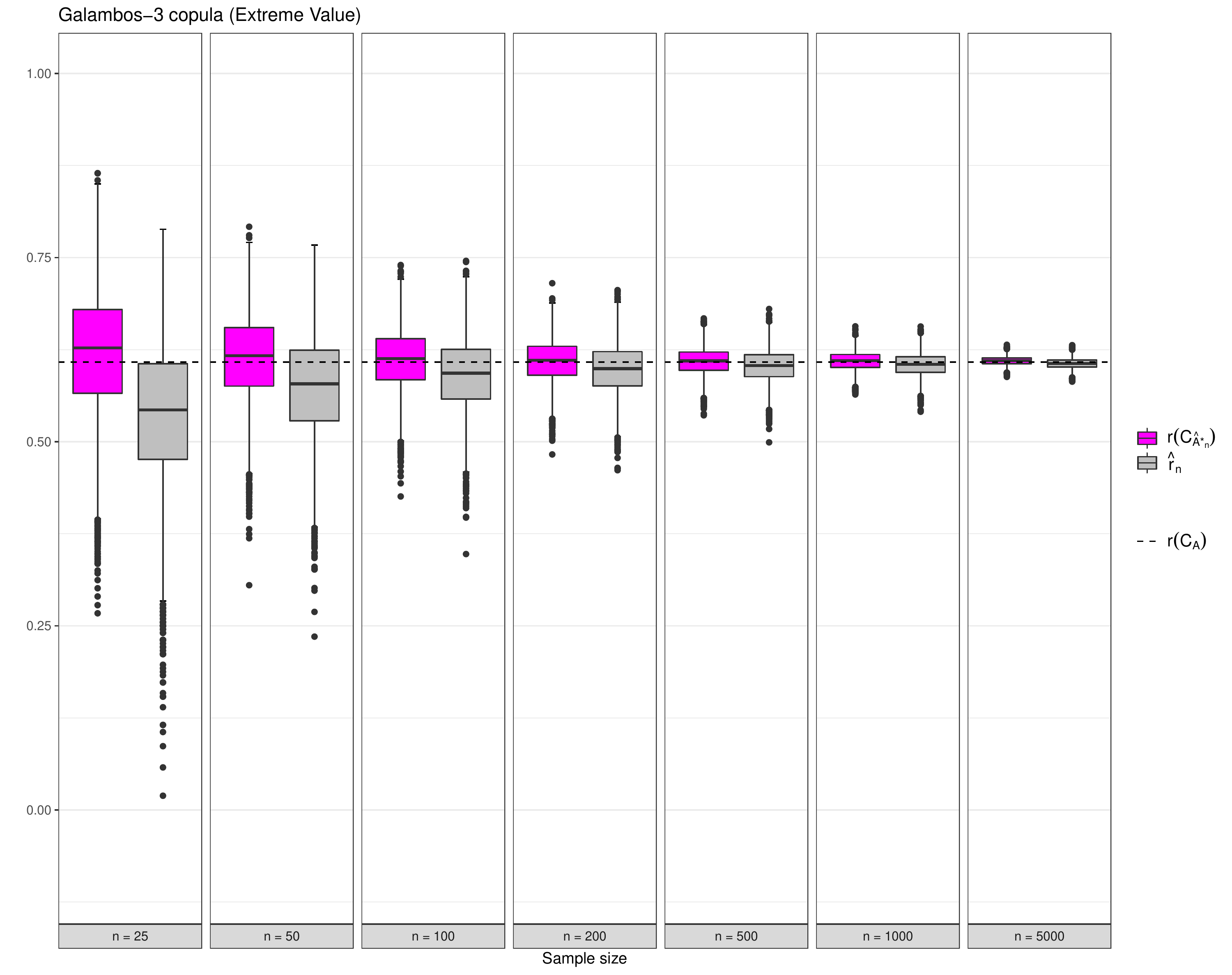}
		\caption{Boxplots of the obtained value of $\hat{r}_n$ and $r(C_{\hat{A}_n^\ast})$ based on samples 
		from the Galambos copula with parameter $\theta = 3$.}
		\label{sim_Gal5}
	\end{figure}				
		\begin{figure}[!ht]
			\centering
			\includegraphics[width=0.95\textwidth]{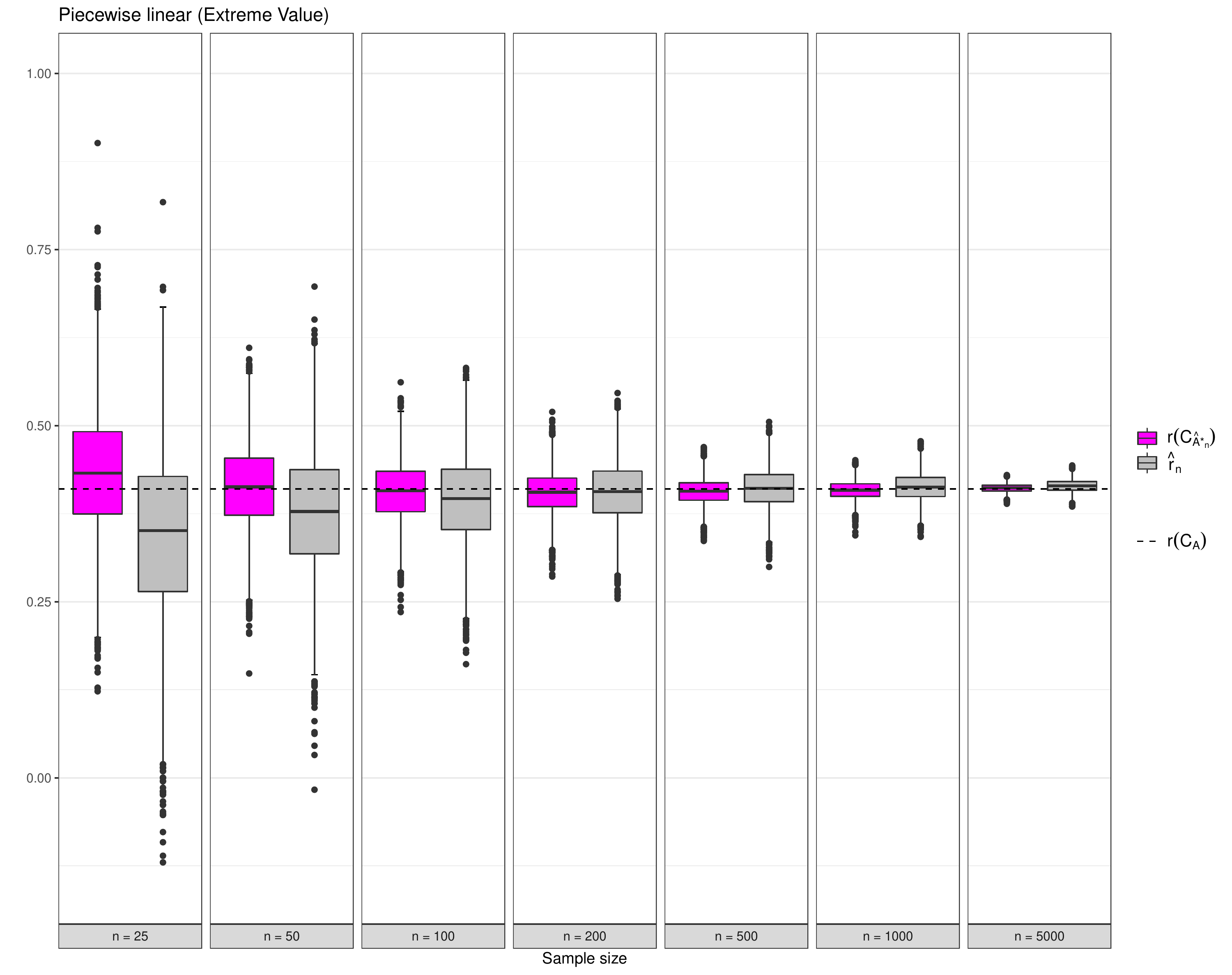}
			\caption{Boxplots of the obtained value of $\hat{r}_n$ and $r(C_{\hat{A}_n^\ast})$ based on
			 samples from the copula with Pickands dependence function $A$ according to eq. (\ref{eq:ex.sim.evc}).}
			\label{sim_EVC_pwl}
		\end{figure}
	
	Not surprisingly, Figure \ref{sim_Gal5} and Figure \ref{sim_EVC_pwl} show that for small to moderate sample sizes 
	the plugin estimator $r(C_{\hat{A}_n^\ast})$ (using the Extreme Value information) yields better results than $\hat{r}_n$, 
	for large sample sizes both estimators perform similarly well.
	
	\subsection{Archimedean copulas}
	Turning to the Archimedean setting suppose now that $C$ is an Archimedean copula with generator $\varphi$ and let $(X_1,Y_1),(X_2,Y_2),\ldots$ be a random 
	sample from $(X,Y) \sim C$. We will let $\hat{F}_n^{\text{Kendall}}$ denote the estimator of the Kendall distribution function $F^{\text{Kendall}}$ of $C$ called $K_{n,2}$ in \cite[Lemma 1]{GNZ}.  
	According to \cite{GNZ} (also see \cite{barbe,GenestInference}), 
	$\hat{F}_n^{\text{Kendall}}$ itself is a Kendall distribution function of an Archimedean copula and	
	under mild regularity conditions the so-called empirical Kendall process 
	$\sqrt{n}(\hat{F}_n^{\text{Kendall}}-F^{\text{Kendall}})$ converges weakly to a centered Gaussian process. 
	If $\hat{F}_n^{\text{Kendall}}$ converges weakly to $F^{\text{Kendall}}$ 
	(to the best of authors' knowledge no sufficient conditions for this property to hold are known in the literature)
	then, according to Theorem \ref{result}, we automatically have weak conditional convergence of the sequence of corresponding Archimedean copulas 
	$(C_{\hat{\varphi}_n})_{n \in \mathbb{N}}$ to $C$, whereby $\hat{\varphi}_n$ denotes the (normalized) generator obtained from 
	$\hat{F}_n^{\text{Kendall}}$. Moreover, according to Corollary \ref{cor:main1}
	$$
	\lim_{n \rightarrow \infty} \vert \zeta_1(C_{\hat{\varphi}_n}) - \zeta_1(C) \vert = 0
	$$
	holds and the same is true for the dependence measure studied in \cite{Dette}, i.e., for estimating $\zeta_1(C)$ it suffices 
	to have a good estimator of the Kendall distribution function $F^{\text{Kendall}}$. 
	
	
	\begin{Ex}\label{ex:estimation}
		We illustrate the afore-mentioned properties with simulations in R and consider the (normalized) generator
		$\varphi(x) = (-\log(\frac{1}{2}))^{-3}\cdot(-\log(x))^3$ of the  Gumbel copula $C_\varphi$ with parameter 
		$\theta = 3$. Figure \ref{sample_gumbel10000} depicts a sample of size
		$n=10000$ from this copula as well as a two-dimensional and the corresponding marginal histograms.
		\begin{figure}[!htp]
			\centering
			\includegraphics[width=0.8\textwidth]{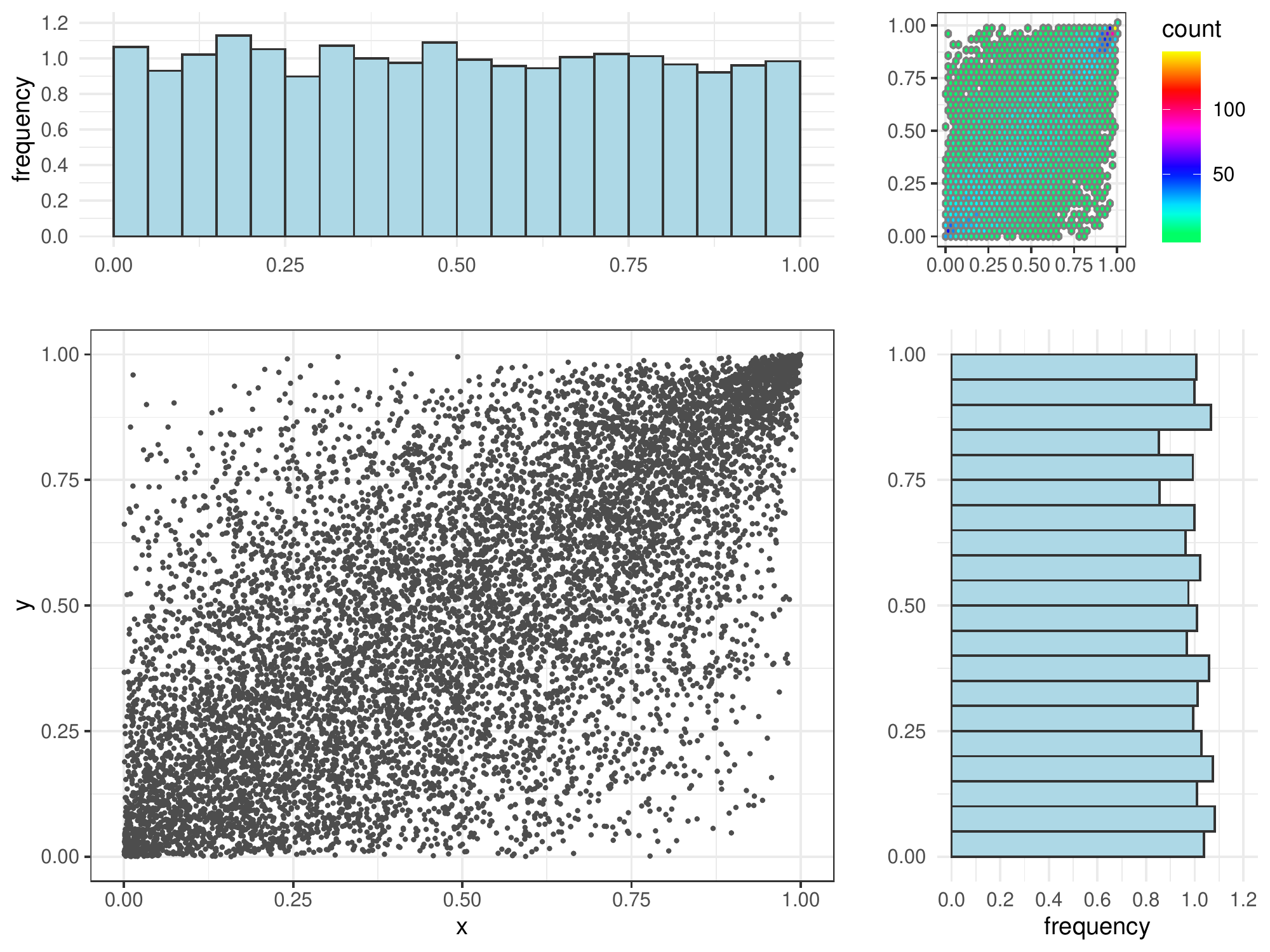}
			\caption{Sample of size $n=10000$ from the Gumbel copula with parameter $\theta=3$ as in Example \ref{ex:estimation} (lower left panel); two-dimensional histogram (upper right panel) and marginal histograms (upper left and lower right panel).}
			\label{sample_gumbel10000}
		\end{figure}
		For both samples we use the R-package \textquoteleft copula' (see \cite{copulaPackage}) to estimate the empi\-rical Kendall distribution function $\hat{F}_n^{\text{Kendall}}$ as described in \cite{GNZ}. Figure \ref{empKendall} (left panel) depicts the real as well as the estimated Kendall distribution function for the sample sizes $n=500$ and $n=10000$, respectively.
		\begin{figure}[!htp]
			\centering
			\includegraphics[width=0.75\textwidth]{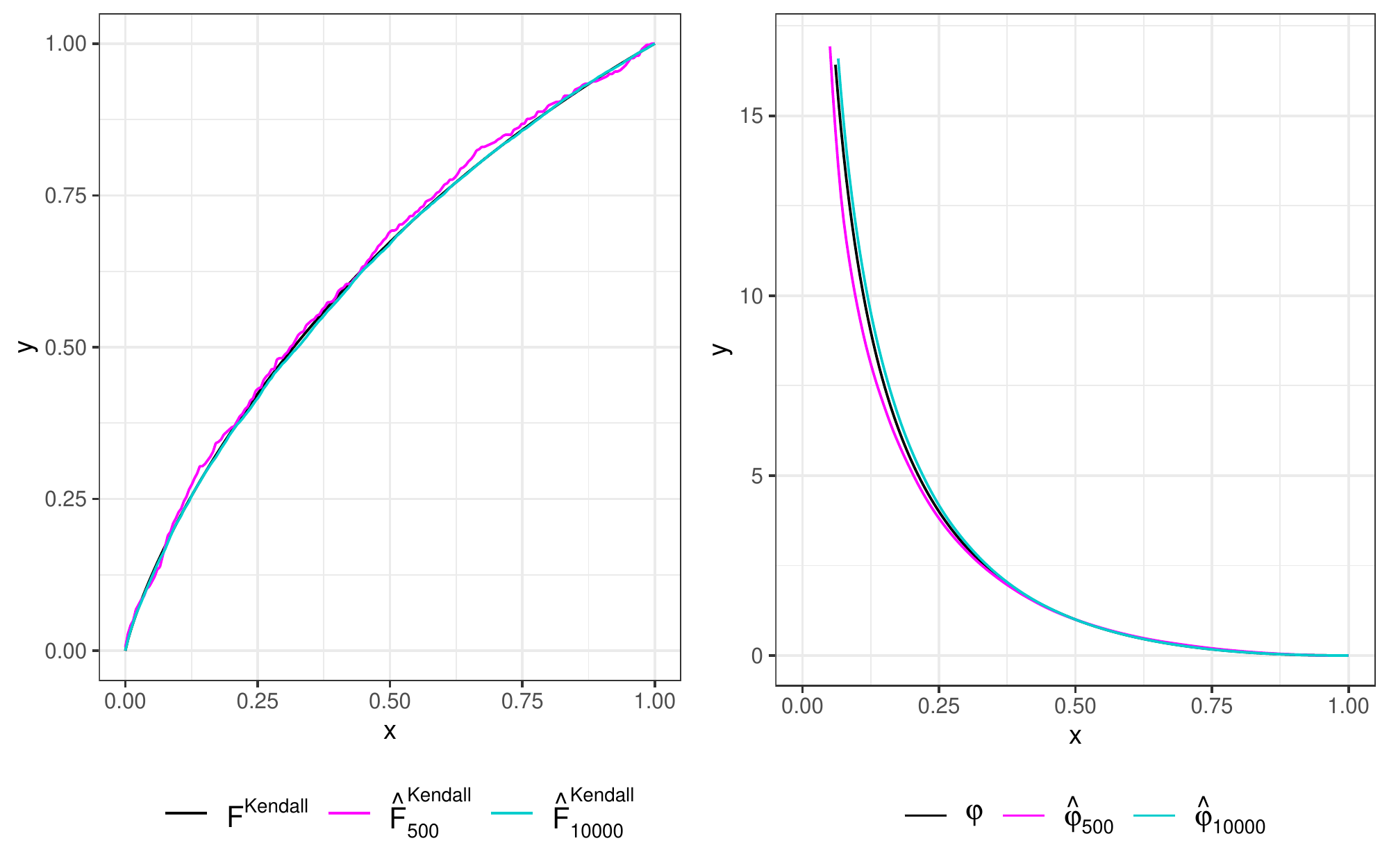}
			\caption{Kendall distribution function of a Gumbel copula with parameter $\theta=3$ (black) 
				and its estimate $\hat{F}_n^{\text{Kendall}}$ for $n=500$ and $n=10000$ as considered in Example \ref{ex:estimation} (left panel); (normalized) generator (black) and its estimates $\hat{\varphi}_n$ 
				for $n=500$ and $n=10000$ as considered in Example \ref{ex:estimation} (right panel).}
			\label{empKendall}
		\end{figure}
		Based on $\hat{F}_n^{\text{Kendall}}$ we derive the estimated (normalized) generator $\hat{\varphi}_n$ by (again see Figure \ref{empKendall})  
		\begin{align*}
		\hat\varphi_n(x) = \exp\bigg(\textrm{sign}\left(x-\frac{1}{2}\right) \int_{\min(\frac{1}{2},x)}^{\max(\frac{1}{2},x)}
		\frac{1}{t-\hat{F}^\text{Kendall}_n(t)} \ \mathrm{d}t\bigg).
		\end{align*}
		Given $\hat{\varphi}_n$ we finally calculate the estimated Archimedean copula $C_{\hat{\varphi}_n}$ and calculate its Markov kernel $K_{C_{\hat{\varphi}_n}}$. 
		For the dependence measure $\zeta_1$ using the R-package \textquoteleft qad' (\cite{qadPackage}) 
		we obtained the following values: $\zeta_1(C_{\hat \varphi_{500}}) = 0.7117, 
		\zeta_1(C_{\hat \varphi_{10000}}) = 0.7041, \zeta_1(C_\varphi) = 0.6910$. 
	\end{Ex}
	
	\begin{figure}[!htp]
		\centering
		\includegraphics[width=0.85\textwidth]{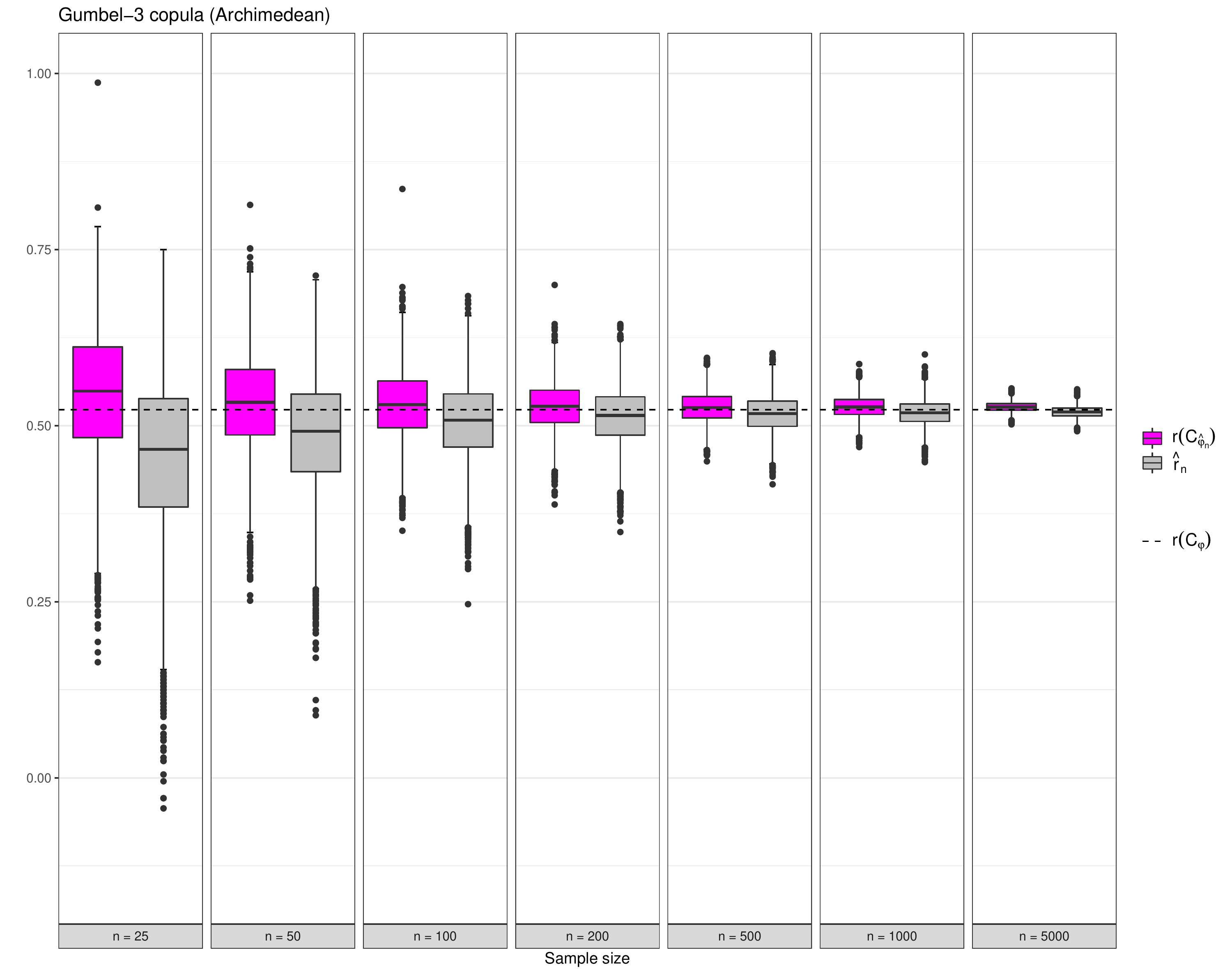}
		\caption{Boxplots of the obtained value of $\hat{r}_n$ and $r(C_{\hat{A}_n^\ast})$ based on samples from 
		the Gumbel copula with parameter $\theta = 3$.}
		\label{sim_Gum3}
	\end{figure}	
	
	As in the Extreme Value setting we perform a small simulation study comparing the performance of the plugin 
	estimator with the estimator $\hat{r}_n$ of the coefficient of correlation $r$ (ignoring the 
	Archimedean information).  
	Generating samples of the Gumbel copula with parameter $\theta = 3$, calculating $\hat{r}_n$ as well as 
	$r(C_{\hat{\varphi}_n})= 6\cdot D_2(C_{\hat{\varphi}_n},\Pi)$ for these samples and repeating 
	this procedure $R=5000$ times yielded the results depicted in Figure \ref{sim_Gum3}. 
	Not surprisingly, for small to moderate sample sizes the plugin estimator $r(C_{\hat{\varphi}_n})$ outperforms 
	the general estimator $\hat{r}_n$, for large sample sizes both estimators perform comparably well.
	
 	\section*{Acknowledgements}
	The first author gratefully acknowledges the financial support from Porsche Holding Austria and 
	Land Salzburg within the WISS 2025 project \textquoteleft KFZ' (P1900123).
	Moreover, the second and the third author gratefully acknowledge the support of the WISS 2025 project \textquoteleft 
	IDA-lab Salzburg' (20204-WISS/225/197-2019 and 0102-F1901166-KZP). \\[-8mm]
	

\begin{thebibliography}{}

\bibitem[\protect\citeauthoryear{Barbe, Genest, Ghoudi, and
  R\'{e}millard}{Barbe et~al.}{1996}]{barbe}
Barbe, P., C.~Genest, K.~Ghoudi, and B.~R\'{e}millard (1996).
\newblock On {K}endall's process.
\newblock {\em J. Multivariate Analysis\/}~{\em 58\/}(2), 197 -- 229.

\bibitem[\protect\citeauthoryear{Billingsley}{Billingsley}{2013}]{billingsley}
Billingsley, P. (2013).
\newblock {\em Convergence of Probability Measures}.
\newblock Wiley Series in Probability and Statistics. Wiley.

\bibitem[\protect\citeauthoryear{Cap\'{e}ra\`{a}, Foug\'{e}res, and
  Genest}{Cap\'{e}ra\`{a} et~al.}{1997}]{Caperaa}
Cap\'{e}ra\`{a}, P., A.-L. Foug\'{e}res, and C.~Genest (1997).
\newblock A nonparametric estimation procedure for bivariate extreme value
  copulas.
\newblock {\em Biometrika\/}~{\em 84}, 567--577.

\bibitem[\protect\citeauthoryear{Charpentier and Segers}{Charpentier and
  Segers}{2008}]{convArch}
Charpentier, A. and J.~Segers (2008).
\newblock Convergence of {A}rchimedean copulas.
\newblock {\em Statist. Probab. Lett.\/}~{\em 78}, 412--419.

\bibitem[\protect\citeauthoryear{Chatterjee}{Chatterjee}{2020a}]{Chatterjee}
Chatterjee, S. (2020a).
\newblock A new coefficient of correlation.
\newblock {\em Journal of the American Statistical Association\/}~{\em 0\/}(0),
  1--21.

\bibitem[\protect\citeauthoryear{Chatterjee}{Chatterjee}{2020b}]{xicorPackage}
Chatterjee, S. (2020b).
\newblock {\em XICOR: Association Measurement Through Cross Rank Increments}.
\newblock {R} package version 0.3.3.

\bibitem[\protect\citeauthoryear{Cherubini, Durante, and Mulinacci}{Cherubini
  et~al.}{2013}]{MarshallOlkinDistributions}
Cherubini, U., F.~Durante, and S.~Mulinacci (2013).
\newblock {\em Marshall-Olkin Distributions - Advances in Theory and
  Applications}.
\newblock Springer International Publishing Switzerland: Springer Proceedings
  in Mathematics \& Statistics.

\bibitem[\protect\citeauthoryear{de~Haan and Resnick}{de~Haan and
  Resnick}{1977}]{deHaan}
de~Haan, L. and S.~Resnick (1977).
\newblock Limit theorem for multivariate sample extremes.
\newblock {\em Z. f{\"u}r Wahrscheinlichkeitstheorie\/}~{\em 40}, 317--337.

\bibitem[\protect\citeauthoryear{Dette, Siburg, and Stoimenov}{Dette
  et~al.}{2013}]{Dette}
Dette, H., K.~Siburg, and P.~Stoimenov (2013).
\newblock A {C}opula-{B}ased {N}on-parametric {M}easure of {R}egression
  {D}ependence.
\newblock {\em Scand. J. Stat.\/}~{\em 40}, 21--41.

\bibitem[\protect\citeauthoryear{Durante and Sempi}{Durante and
  Sempi}{2016}]{Principles}
Durante, F. and C.~Sempi (2016).
\newblock {\em Principles of copula theory}.
\newblock Boca Raton FL: Taylor \& Francis Group LLC.

\bibitem[\protect\citeauthoryear{Fern\'{a}ndez~S\'{a}nchez and
  Trutschnig}{Fern\'{a}ndez~S\'{a}nchez and Trutschnig}{2015a}]{p15}
Fern\'{a}ndez~S\'{a}nchez, J. and W.~Trutschnig (2015a).
\newblock Conditioning based metrics on the space of multivariate copulas,
  their interrelation with uniform and levelwise convergence and {I}terated
  {F}unction {S}ystems.
\newblock {\em J. Theoret. Probab.\/}~{\em 28}, 1311--1336.

\bibitem[\protect\citeauthoryear{Fern\'{a}ndez~S\'{a}nchez and
  Trutschnig}{Fern\'{a}ndez~S\'{a}nchez and Trutschnig}{2015b}]{p21}
Fern\'{a}ndez~S\'{a}nchez, J. and W.~Trutschnig (2015b).
\newblock Singularity aspects of {A}rchimedean copulas.
\newblock {\em J. Math. Anal. Appl.\/}~{\em 432}, 103--113.

\bibitem[\protect\citeauthoryear{Genest, Ne{\v{s}}lehov{\'a}, and
  Ziegel}{Genest et~al.}{2011}]{GNZ}
Genest, C., J.~Ne{\v{s}}lehov{\'a}, and J.~Ziegel (2011).
\newblock {Inference in multivariate Archimedean copula models}.
\newblock {\em Test\/}~{\em 20}, 223.

\bibitem[\protect\citeauthoryear{Genest and Rivest}{Genest and
  Rivest}{1993}]{GenestInference}
Genest, C. and L.~Rivest (1993).
\newblock Statistical inference procedures for bivariate {A}rchimedean copulas.
\newblock {\em J. Amer. Stat. Assoc.\/}~{\em 88}, 1034--1043.

\bibitem[\protect\citeauthoryear{Genest and Segers}{Genest and
  Segers}{2009}]{InferenceEVs}
Genest, C. and J.~Segers (2009).
\newblock Rank-based inference for bivariate extreme-value copulas.
\newblock {\em Annals of Statistics\/}~{\em 37}, 2990--3022.

\bibitem[\protect\citeauthoryear{Ghoudi, Khoudraji, and Rivest}{Ghoudi
  et~al.}{1998}]{Ghoudi}
Ghoudi, K., A.~Khoudraji, and L.-P. Rivest (1998).
\newblock Propri\'{e}t\'{e}s statistiques des copules de valeurs extremes
  bidimensionnelles.
\newblock {\em Canad. J. Statist.\/}~{\em 26}, 187--197.

\bibitem[\protect\citeauthoryear{Griessenberger, Junker, and
  Trutschnig}{Griessenberger et~al.}{2020}]{qadPackage}
Griessenberger, F., R.~R. Junker, and W.~Trutschnig (2020).
\newblock {\em qad: Quantification of Asymmetric Dependence}.
\newblock {R} package version 0.1.2.

\bibitem[\protect\citeauthoryear{Gudendorf and Segers}{Gudendorf and
  Segers}{2010}]{GS2}
Gudendorf, G. and J.~Segers (2010).
\newblock Extreme-value copulas.
\newblock In P.~Jaworski, F.~Durante, K.~H{\"a}rdle, and T.~Rychlik (Eds.), {\em
  Copula Theory and Its Applications}, Chapter~6, pp.\  127--145. Heidelberg:
  Springer, Berlin, Heidelberg.

\bibitem[\protect\citeauthoryear{Gudendorf and Segers}{Gudendorf and
  Segers}{2011}]{GS}
Gudendorf, G. and J.~Segers (2011).
\newblock Nonparametric estimation of an extreme-value copula in arbitrary
  dimensions.
\newblock {\em J. Multivariate Analysis\/}~{\em 102\/}(1), 37 -- 47.

\bibitem[\protect\citeauthoryear{Hofert, Kojadinovic, Maechler, and Yan}{Hofert
  et~al.}{2020}]{copulaPackage}
Hofert, M., I.~Kojadinovic, M.~Maechler, and J.~Yan (2020).
\newblock {\em copula: Multivariate Dependence with Copulas}.
\newblock {R} package version 0.999-20.

\bibitem[\protect\citeauthoryear{Junker, Griessenberger, and Trutschnig}{Junker
  et~al.}{2020}]{Griessenberger}
Junker, R.~R., F.~Griessenberger, and W.~Trutschnig (2020).
\newblock Estimating scale-invariant directed dependence of bivariate
  distributions.
\newblock {\em Computational Statistics \& Data Analysis\/}~{\em 153\/}(0), 0.

\bibitem[\protect\citeauthoryear{Kallenberg}{Kallenberg}{2002}]{Kallenberg}
Kallenberg, O. (2002).
\newblock {\em Foundations of modern probability}.
\newblock New York: Springer-Verlag.

\bibitem[\protect\citeauthoryear{Kannan and Krueger}{Kannan and
  Krueger}{2012}]{kannan}
Kannan, R. and C.~Krueger (2012).
\newblock {\em Advanced Analysis: on the Real Line}.
\newblock Universitext. Springer New York.

\bibitem[\protect\citeauthoryear{Klement, Mesiar, and Pap}{Klement
  et~al.}{2000}]{tnorms}
Klement, E., R.~Mesiar, and E.~Pap (2000).
\newblock {\em Triangular Norms\/} (8 ed.).
\newblock Springer Netherlands.

\bibitem[\protect\citeauthoryear{Klenke}{Klenke}{2008}]{Klenke}
Klenke, A. (2008).
\newblock {\em Wahrscheinlichkeitstheorie}.
\newblock Berlin Heidelberg: Springer Lehrbuch Masterclass Series.

\bibitem[\protect\citeauthoryear{Lancaster}{Lancaster}{1963}]{lanc}
Lancaster, H.~O. (1963, 12).
\newblock Correlation and complete dependence of random variables.
\newblock {\em Ann. Math. Statist.\/}~{\em 34\/}(4), 1315--1321.

\bibitem[\protect\citeauthoryear{Li, Mikusinski, and Taylor}{Li
  et~al.}{1998}]{approximation}
Li, X., P.~Mikusinski, and M.~Taylor (1998).
\newblock Strong approximation of copulas.
\newblock {\em J. Math. Anal. Appl.\/}~{\em 255}, 608--623.

\bibitem[\protect\citeauthoryear{Marshall}{Marshall}{1970}]{Marshall}
Marshall, A.~W. (1970).
\newblock Discussion of barlow and van zwet's papers.
\newblock In M.~L. Puri (Ed.), {\em Nonparametric Techniques in Statistical
  Inference}, pp.\  175--176. London: Cambridge University Press.

\bibitem[\protect\citeauthoryear{Mikusi\'{n}ski and Taylor}{Mikusi\'{n}ski and
  Taylor}{2010}]{approxNCopulas}
Mikusi\'{n}ski, P. and M.~Taylor (2010).
\newblock Some approximations of n-copulas.
\newblock {\em Metrika\/}~(72), 385--414.

\bibitem[\protect\citeauthoryear{Nelsen}{Nelsen}{2006}]{Nelsen}
Nelsen, R. (2006).
\newblock {\em An Introduction to Copulas}.
\newblock Berlin Heidelberg: Springer-Verlag.

\bibitem[\protect\citeauthoryear{Pickands}{Pickands}{1981}]{Pickands}
Pickands, J. (1981).
\newblock Multivariate extreme value distributions.
\newblock {\em Proceedings 43rd Session International Statistical
  Institute\/}~{\em 2}, 859--878.

\bibitem[\protect\citeauthoryear{Pollard}{Pollard}{2001}]{pollard}
Pollard, D. (2001).
\newblock {\em A User's Guide to Measure Theoretic Probability}.
\newblock Cambridge Series in Statistical and Probabilistic Mathematics.
  Cambridge University Press.

\bibitem[\protect\citeauthoryear{Rockafellar}{Rockafellar}{1970}]{rockafellar}
Rockafellar, R. (1970).
\newblock {\em Convex Analysis}.
\newblock Princeton Landmarks in Mathematics and Physics. Princeton University
  Press.

\bibitem[\protect\citeauthoryear{Rudin}{Rudin}{1987}]{rudinrc}
Rudin, W. (1987).
\newblock {\em Real and Complex Analysis, 3rd Ed.}
\newblock USA: McGraw-Hill, Inc.

\bibitem[\protect\citeauthoryear{Sempi}{Sempi}{2004}]{Sempi:Convergence}
Sempi, C. (2004).
\newblock Convergence of copulas: Critical remarks.
\newblock {\em Radovi Matemati\v{c}ki\/}~{\em 12}, 241--249.

\bibitem[\protect\citeauthoryear{Trutschnig}{Trutschnig}{2011}]{p06}
Trutschnig, W. (2011).
\newblock On a strong metric on the space of copulas and its induced dependence
  measure.
\newblock {\em J. Math. Anal. Appl.\/}~{\em 384}, 690--705.

\bibitem[\protect\citeauthoryear{Trutschnig}{Trutschnig}{2012}]{p07}
Trutschnig, W. (2012).
\newblock Some results on the convergence of (quasi-) copulas.
\newblock {\em Fuzzy Sets and Systems\/}~{\em 191}, 113--121.

\bibitem[\protect\citeauthoryear{Trutschnig, Schreyer, and
  Fern\'{a}ndez~S\'{a}nchez}{Trutschnig et~al.}{2016}]{p23}
Trutschnig, W., M.~Schreyer, and J.~Fern\'{a}ndez~S\'{a}nchez (2016).
\newblock Mass distribution of two-dimensional extreme-value copulas and
  related results.
\newblock {\em Extremes\/}~{\em 19}, 405--427.

\end{thebibliography}
	\ifx\undefined\allcaps\def\allcaps#1{#1}\fi

\end{document}